\newtheorem{theorem}{Theorem}[section]
\newtheorem{corollary}[theorem]{Corollary}
\newtheorem{lemma}[theorem]{Lemma}
\newtheorem{remark}[theorem]{Remark}
\numberwithin{equation}{section}
\newtheorem*{theorem*}{Theorem}
\DeclareMathOperator*{\esssup}{ess\,sup}
\journal{Physica D}
\begin{document}

\begin{frontmatter}
\title{Optical beam shaping and diffraction free waves: a variational approach}
\author[az]{John A. Gemmer\corref{cor1}}
\ead{john\_gemmer@brown.edu}
\author[bz,cz]{Shankar C. Venkataramani}
\author[dz]{Charles G. Durfee}
\author[bz,cz,ez]{Jerome V. Moloney}

\address[az]{Division of Applied Mathematics, Brown University, Providence, RI 02912, U.S.A.}
\address[bz]{Arizona Center for Mathematical Sciences, Department of Mathematics, University of Arizona, Tucson, AZ 85721, U.S.A.}
\address[cz]{Department of Mathematics, University of Arizona, Tucson, AZ 85721, U.S.A.}
\address[dz]{Department of Physics, Colorado School of Mines, Golden, CO 80401, U.S.A.}
\address[ez]{College of Optical Sciences, University of Arizona, Tucson, AZ 85721, U.S.A.}
\cortext[cor1]{Principal corresponding author}

\begin{keyword}
Beam shaping; Localized waves; Phase retrieval; Paraxial wave equation; Fresnel approximation
\end{keyword}

\begin{abstract} 
We investigate the problem of shaping radially symmetric annular beams into desired intensity patterns along the optical axis. Within the Fresnel approximation, we show that this problem can be expressed in a variational form equivalent to the one arising in phase retrieval. Using the uncertainty principle we prove various rigorous lower bounds on the functional; these lower bounds  estimate the $L^2$ error for the beam shaping problem in terms of the design parameters. We also use the method of stationary phase to construct a natural \emph{ansatz} for a minimizer in the short wavelength limit.  We illustrate the implications of our results by applying the method of stationary phase coupled with the Gerchberg-Saxton algorithm to beam shaping problems arising in the remote delivery of beams and pulses. 
\end{abstract}

\end{frontmatter}

%%
%% Start line numbering here if you want
%%

%% % % % % % % % % % % % % % % % % % % % % %
%
%		Introduction
%
% % % % % % % % % % % % % % % % % % % % % %
\section{Introduction}

%%%%%%%%%%%%%%%%%%%
%
%		Motivation
%
%%%%%%%%%%%%%%%%%%%%
\subsection{Physical description and motivation of the problem}
In many applications it is desirable to shape a beam or pulse of light so that it has specific  properties along the optical axis. In particular, for applications in microscopic imaging \cite{Planchon:2011tp}, optical tweezers \cite{Arlt:2001up}, laser micro-machining \cite{Courvoisier:2012ik}, dressing of optical filaments \cite{mills2013dressed}, filament formation \cite{Polynkin:2008vm} and long-range laser ablation \cite{stelmaszczyk2004long, rohwetter2005filament}, to name a few, it is important to have a well controlled beam with a nearly uniform intensity along the optical axis. However, due to their wave nature localized packets of light will broaden spatially through diffraction. For example, Gaussian beams of width $W_0$ and wavenumber $k$ double in spatial extent over the Rayleigh range $z_R\sim W_0^2k/2$ \cite{rayleigh1881xxix}. 

%The dispersive material properties of the medium the light is propagating in will also lead to temporal broadening.  For example,  pulses with a Gaussian temporal width $\tau_0$ propagating in a medium with a group velocity dispersion $\gamma$ the temporal width doubles at the dispersion distance $z_D\sim \tau_0^2/\gamma$ \cite{siegman1986university}. 

In a linear, isotropic medium the electric field of a beam of light polarized in the $\hat{\mathbf{e}}$ direction can be modeled by a wave packet of the form $\mathbf{E}(\mathbf{x}_{\perp}, z)=E(x,y,z)\exp\left(i k z-i\omega_0t\right)\hat{\mathbf{e}}$, where $z\in \mathbb{R}^+$ denotes the spatial coordinate along the optical axis, $x,y\in \mathbb{R}$ are the Cartesian coordinates transverse to the optical axis, $t\in \mathbb{R}^+$ is time and $\omega_0$ the carrier angular frequency of the wave. Within the slowly varying \emph{ansatz} on $E$, the propagation of a beam with initial data $E(x,y,0)=E_0f(x,y)$ at the $z=0$ plane is described by the paraxial wave equation:
\begin{equation}\label{intro:GoverningEquation}
\begin{cases}
\frac{\partial E}{\partial z}=\frac{i}{2k}\Delta_{{\perp}}E,\\
E(x,y,0)=E_0f(x,y)
\end{cases}
\end{equation}
where $\Delta_{\perp}$ is the transverse Laplacian defined by $\Delta_{\perp}=\partial^2_x+\partial^2_y$ \cite{newell1992nonlinear}. 
Exact solutions to Equation (\ref{intro:GoverningEquation}) can be expressed in terms of an integral transform of the initial profile $f(x,y)$; see \ref{Appendix:Solution}. This integral transform is in fact equal to the Fresnel diffraction integral \cite{grella1982fresnel}.

Constructing solutions to (\ref{intro:GoverningEquation}) in free space that resist diffraction over length scales much larger than the Rayleigh range has attracted considerable interest within the optics community. Diffraction free beams are solutions to Equation (\ref{intro:GoverningEquation}) satisfying the condition that for all $z\in \mathbb{R}^+$, $|E(x,y,0)|= |E(x,y,z)|$. If we make the \emph{ansatz} $E(x,y,z)=A(x,y)\exp(i k^{\prime} z)$ then diffraction free beams can be found by solving the two dimensional Helmholtz equation. Indeed, plane waves, Bessel beams \cite{durnin1987exact, durnin1987diffraction}, Mathieu beams  \cite{gutierrez2000alternative} and parabolic beams \cite{bandres2004parabolic} are examples of such solutions in various orthogonal coordinate systems. 

%Polychromatic superpositions of Bessel beams have also been constructed that in addition to being diffraction free remain temporally localized  \cite{lu1992nondiffracting, porras2004localized}. These pulses are time dependent solutions to (\ref{intro:GoverningEquation}) for which the spectrum is a Dirac delta function supported on a curve in $\mathbf{k}-\omega$ space \cite{porras2004localized}. The exact form of this curve is selected so that the pulse has a specific group and phase velocity along the optical axis. In particular, for normal dispersion $(\gamma>0)$ the curves are hyperbolas while for anomalous dispersion $(\gamma<0)$ the curves are ellipses and hence these solutions to (\ref{intro:GoverningEquation}) have been called X and O waves respectively \cite{lu1992nondiffracting, porras2004localized}.

Note, however, that the diffraction free beams described above are not realizable in practice since they carry infinite energy. One technique for obtaining finite energy approximations of such solutions is apodization at the input plane $z=0$. That is, if $E(x,y,z)$ solves (\ref{intro:GoverningEquation}) then apodized approximations to $E(x,y,z)$ are found by solving (\ref{intro:GoverningEquation}) with the initial data $E(x,y,0)f(x,y)$ where $f(x,y)$ is a function of finite extent. In \cite{Gori:1987jk} so called ``Bessel-Gauss'' beams were created by apodizing Bessel beams with a Gaussian function and a general framework for constructing Gaussian apodized solutions to (\ref{intro:GoverningEquation}) was developed in \cite{gutierrez2005helmholtz} and \cite{Graf:2012wy}. 

%Specifically, we consider starting with a Gaussian ring-beam and adjusting its wavefront to produce a desired axial intensity profile. Such a ring beam can be formed, for example, by passing a Gaussian beam through a diffraction grating with concentric grooves [20].

Bessel-Gauss beams have received considerable interest because they can be produced experimentally, for example by a diffraction grating with concentric grooves \cite{niggl1997properties}. Alternatively, a Gaussian beam passing through an axicon lens produces a ring beam with a truncated Gaussian cross-section that results from the division of the beam around the axicon tip \cite{mcgloin2005bessel}. The effect of these optical elements is to create a linear superposition of Gaussian beamlets whose individual wave vectors $k=k_{\perp}\sin(\theta)$ lie on a cone of aperture angle $\theta$. The mutual interference of these beamlets generates the Bessel pattern and due to influx of energy supplied by the oscillatory wings of the Bessel function to the central core, these waves are robust to nonlinear losses and physical barriers \cite{bouchal1998self}. However, for a truncation of width $W_A$ the Bessel-Gauss beam only accurately approximates the ideal Bessel beam in a ``Bessel-zone'' of width $z_{BZ}\sim W_{A}/\tan(\theta)$ near the tip of the axicon \cite{graf2012asymptotic} and then transitions to a ring-beam in the far-field \cite{Gori:1987jk, Bagini}. 

It was also shown by Bagini et. al. \cite{Bagini} that Bessel-Gauss beams can be created remotely by focusing ring beams. Specifically, in cylindrical coordinates $(r,\theta,z)$  Bessel-Gauss beams can be generated remotely by solving Equation (\ref{intro:GoverningEquation}) with the initial data 
\begin{equation}
E(r,0)=E_0\exp\left(-\frac{r^2+a^2}{W_0^2}\right)J_0\left(i\frac{2ar}{z_d}+\beta r\right),
\end{equation}
 where $E_0$ is the peak value of the electric field, $a$ is the radius of the ring beam, $W_0$ is the characteristic width of the ring, $z_d$ is the focal distance, $J_0$ is the zero order Bessel function of the first kind and $\beta$ is a parameter related to the focusing of the beam \cite{Bagini}. Note that for this initial data the argument of the Bessel function is complex valued and in particular if $\beta=0$ -- no focusing -- it reduces to a modified Bessel function of the first kind. In Figure \ref{fig:GenBesselBeams} we provide a contour plot of the $r-z$ intensity profile for such a ``generalized'' Bessel-Gauss beams and select four radial cross sections to highlight the transition from a ring-beam into a Bessel-Gauss beam and back to a ring beam. As can be seen from Figure \ref{fig:GenBesselBeams}, the intensity profile along the optical axis ($r=0$) is not uniform in $z$ within the Bessel-zone and is in fact given by a Lorentzian-Gaussian type function \cite{Gori:1987jk, Bagini}.

\begin{figure}[ht]
                \includegraphics[width=\textwidth]{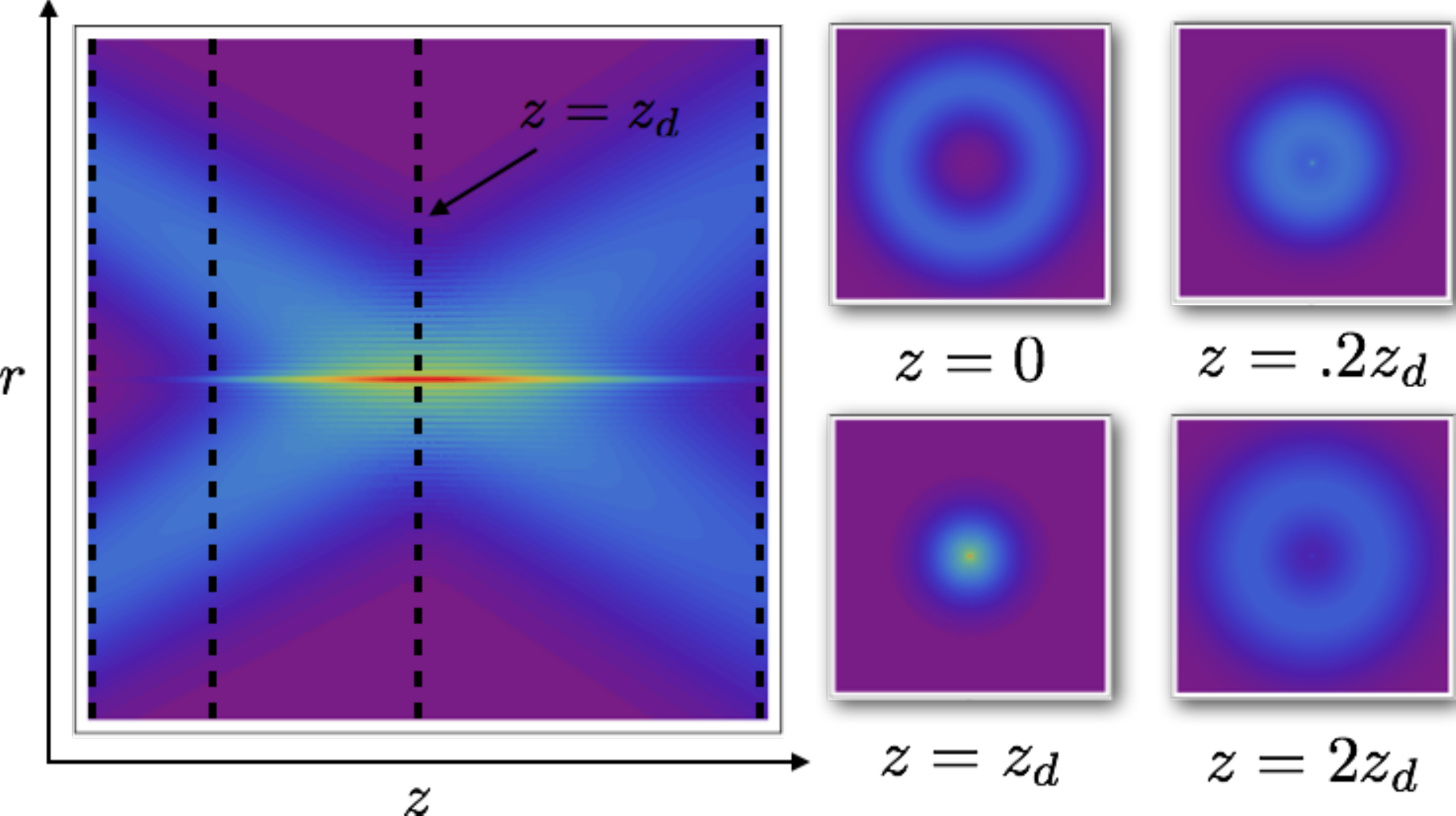}
                \caption{ The large image on the left is a contour plot of the square root of the intensity profile for a representative of the generalized Bessel-Gauss beams discovered by Bagini et. al. \cite{Bagini}. The four smaller plots on the right are cross sections of the beam transverse to the optical-axis corresponding to the vertical dashed lines.  The figure illustrates the transition of the ring beam into a Bessel-Gauss beam and back into a ring beam in the far field along the optical axis. }\label{fig:GenBesselBeams}
\end{figure}

The non-uniformity of the on-axis intensity limits the utility of Bessel-beams in some applications. In this paper we explore the problem of applying additional phase shaping to the ring beam in order to create beams with specific properties along the optical axis. Specifically, we consider the problem of constructing a phase function $\phi(r)$ at the input plane $z=0$ that focuses a ring beam intensity profile $E_0f(r)$ of radius $r_0$ and width $W_0$ into a target intensity profile $E_TF_T(z)$ of width $W_T$ along the optical axis centered at a target distance $z_d$. In Figure \ref{fig:Besselsubfig2} we illustrate the geometry and notation for the problem we are considering. The constants $E_0,\, E_T>0$ denote peak values of the electric field and $f(r)$, $F_T$ are smooth, non negative compactly supported functions with a maximum value of $1$ supported on the intervals $S_f$ and $S_{F_T}$  defined by
\begin{equation}
S_f=\left\{ r\in \mathbb{R}^+: |r-r_0|\leq \frac{W_0}{2} \right\} \text{ and }
S_{F_T}=\left \{z\in \mathbb{R}^+ : |z-z_d| \leq \frac{W_T}{2} \right\},
\end{equation}
respectively. Simultaneous amplitude and phase shaping of the beam will achieve the task and indeed this problem was studied in \cite{vcivzmar2009tunable}. However, in practice this is achieved through holographic techniques which are difficult to implement and have low power efficiency \cite{vcivzmar2009tunable}. 

In Figure \ref{fig:Besselsubfig1} we illustrate one possible method for how phase shaping of ring beams can be achieved. First, a ring beam of radius $r_0$ and width $W_0$ is created by passing a collimated beam through an axicon or a concentric diffraction grating. The phase function is then applied, perhaps through a lens with a radially dependent thickness, which focuses the beam into the desired profile. 

\begin{figure}
        \centering
          \subfigure[][]{
                \includegraphics[width=.9\textwidth]{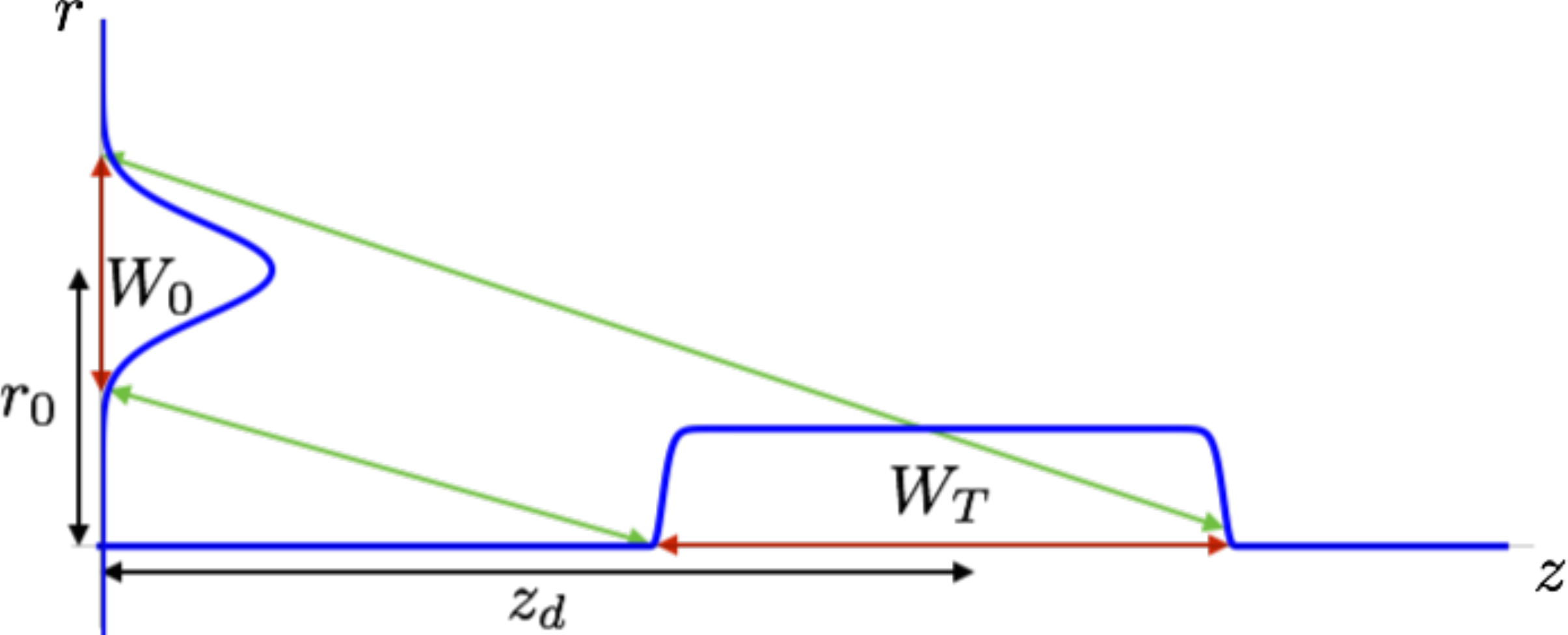}
                \label{fig:Besselsubfig2}}
        \subfigure[][]{
                \includegraphics[width=.9\textwidth]{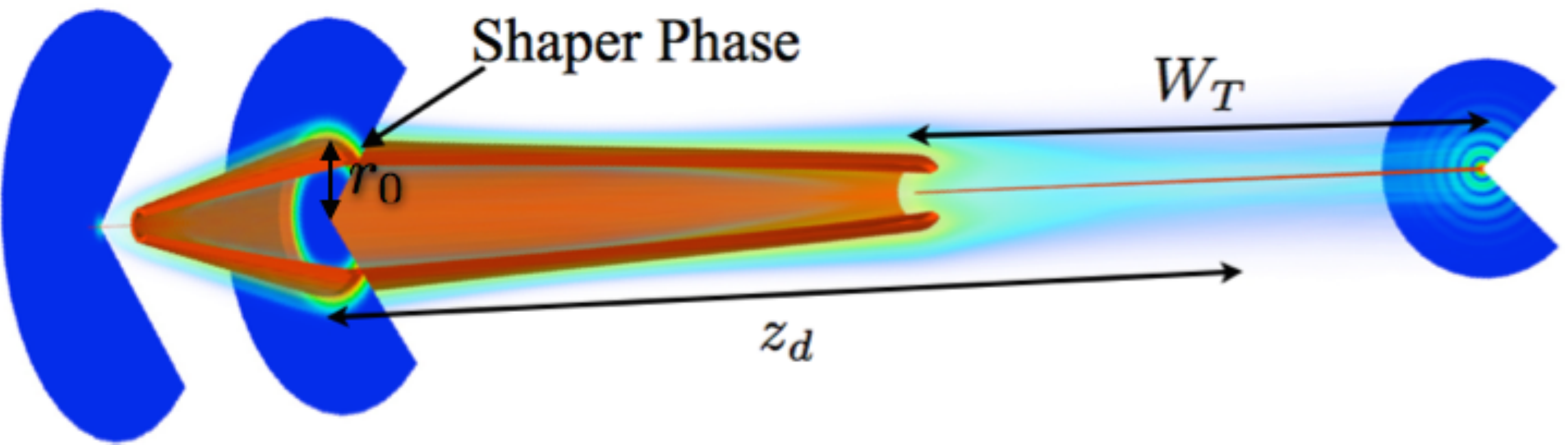}
	\label{fig:Besselsubfig1}}
      
                \caption{\subref{fig:Besselsubfig2} Illustration of the geometry and notation we are using for this problem. \subref{fig:Besselsubfig1} The optical setup for the problem we are studying. A beam is first mapped into a cylindrical beam $E_0f(r)$ of radius $r_0$ and width $W_0$. An optical element is then applied to the cylindrical beam that radially modifies the phase of the cylindrical beam so that the beam is focused a distance $z_d$ into a target intensity profile $E_TF_T(z)$ of width $W_T$. }\label{fig:BesselBeams}
\end{figure}

\subsection{Mathematical formulation of the problem}
The effect of adding a shaper phase $\phi(r)$ can be realized by solving Equation (\ref{intro:GoverningEquation}) with the initial data $E(r,0)=E_0f(r)\exp(i \phi(r))$. The exact solutions to Equation (\ref{intro:GoverningEquation}) can be expressed in terms of a Hankel transform and the field along the optical axis is given by the following relationship:
\begin{equation}\label{Intro:OpticalAxis}
E(0,z)=-\frac{i k}{z}\int_0^{\infty}E_0f(\rho)\exp\left(i \phi\left(\rho\right)\right)\exp\left(\frac{ik\rho^2}{2z}\right)\rho d\rho;
\end{equation}
(see \ref{Appendix:Solution}). In this paper we consider the problem of optimizing the on-axis profile of the beam in the sense that we  are interested in minimizing the  $L^2$ norm:
\begin{equation}
\|E_TF_T(z)- |E(0,z)|\|_{L^2}^2=\int_{-\infty}^{\infty}\left( E_TF_T(z)-|E(0,z)|\right)^2\,dz
\end{equation}
over the space of measurable phase functions $\phi$. Note that while the optimal axis is defined by the half-line $z>0$ we have defined the $L^2$ norm over the entire real axis. We make this definition as it allows us to use the $L^2$ isometry between a function and its Fourier transform. Moreover, this definition of the functional penalizes the beam wasting light outside the support of the target intensity distribution.

Since we are assuming that $f$ is compactly supported away from zero it follows from the change of variables $s=\rho^2$ and $\Omega=k (2z)^{-1}$ that
\begin{equation}\label{Form:FourierTrans}
E\left(0,\frac{k}{2\Omega}\right)=-i \Omega \mathcal{F}\left [f\left(\sqrt{s}\right) e^{i \phi\left(\sqrt{s}\right)} \right](\Omega),
\end{equation}
where $\mathcal{F}$ denotes the Fourier transform from $s$-space into $\Omega$-space. The optimal design problem can then abstractly be posed as finding a measurable phase function $\varphi$ that minimizes the functional $I[\varphi]$ defined by
\begin{equation}
I[\varphi]=\left\| G(\Omega)- |\mathcal{F} \left[g(s) \exp(i \varphi(s))\right](\Omega)\right|\|_{L^2},
\end{equation}
where $G$ and $g$ are positive compactly supported functions. This abstract formulation can be identified with the specific application we are interested in by setting
\begin{equation}
\begin{array}{ccc}
G(\Omega)=\Omega^{-1}E_TF_T\left(\frac{k}{2\Omega}\right), & g(s)=E_0f(\sqrt{s}), & \varphi(s)=\phi(\sqrt{s}).
\end{array}
\end{equation}
In the two-dimensional setting similar functionals have been studied within the context of shaping beams into desired intensity patterns in the focal plane of a lens \cite{dobson1992phase, kotlyar1998iterative, ripoll2004review, pasienski2008high}.

\subsection{Connection with phase retrieval}
The variational problem we are considering is closely related to the problem of phase retrieval from two intensity measurements, i.e. the problem of determining the complex argument of a function given both knowledge of the modulus of a function and the modulus of its Fourier transform. Phase retrieval arises in applications in various fields ranging from astronomy \cite{Gonsalves:1982ud, fienup1987phase}, ultrashort pulses \cite{trebino1993using}, radial beam shaping \cite{Liu:2002bq}, pulse shaping \cite{Rundquist:2002wx}, microscopy \cite{Gerchberg:1972uo},  to image reconstruction \cite{Fienup:1987tx}. A detailed review of the history of this problem and a number of numerical techniques that have been developed to solve this problem can be found in \cite{luke2002optical}. 

Within the context of phase retrieval, the most common technique for optimizing $I$ is the alternating projection algorithm pioneered by Gerchberg and Saxton \cite{Gerchberg:1972uo} and its variants such as the hybrid input-output algorithm discovered by Fienup \cite{Fienup:1982di}. In the original Gerchberg-Saxton algorithm the phase is recovered as follows:
\begin{enumerate}
\item Choose an initial guess $\phi_0 \in \mathcal{M}$.
\item Define $\Psi_n=\arg\left(\mathcal{F}\left[g(s)\exp\left(i \varphi_{n-1}(s)\right)\right]\right)$.
\item Define $\phi_n=\arg\left(\mathcal{F}^{-1}\left[G(\Omega)\exp\left(\left(i \Psi_n(\Omega)\right)\right)\right]\right)$.
\item Loop through items 2 and 3 until $I[\phi_n]$ is sufficiently close to zero.
\end{enumerate}

Through Plancherel's  identity it can be shown that the Gerchberg-Saxton algorithm is an error reducing algorithm in the sense that $I[\phi_{n+1}]\leq I[\phi_{n}]$ \cite{Fienup:1982di}. However, this property alone does not guarantee convergence of the algorithm. While projection algorithms converge on convex sets \cite{youla1987mathematical, bauschke2002phase}, the projections employed by the Gerchberg-Saxton algorithm are analogous to projections onto the boundary of the unit ball in $\mathbb{C}$ which is clearly not convex. This lack of convexity commonly leads to stagnation of the algorithm away from the global minimum which must be overcome by additional ad hoc means \cite{pasienski2008high, fienup1986phase, wackerman1991use}. Indeed, additional techniques such as the hybrid input-output algorithm are commonly implemented along with the Gerchberg-Saxton to force the algorithm out of local minimum \cite{fienup1986phase}. Other techniques for phase retrieval have also been developed including in the two diemnsional setting a variational principle for the transport of intensity distributions between two planes orthogonal to the optical axis \cite{rubinstein2004variational}. 

In phase shaping, however, the infimum of $I$ may be significantly bounded away from zero and therefore when applied to our problem convergence of the Gerchberg-Saxton algorithm cannot be assessed. Indeed, without any further \emph{a priori} information about the minimum value of $I$ stagnation of the algorithm is the only indication of possible convergence. Moreover, if the Gerchberg-Saxton stagnates at a large value of $I$ it may indeed be at the global minimum for the problem. 

\subsection{Organization of paper and summary of main results}
The paper is organized as follows. In section 2 we use elementary Fourier analysis to deduce some basic results concerning the functional $I$ and translate these results in terms of the practical parameters for the phase shaping problem we are interested in. Using Plancherel’s identity we obtain as a necessary condition for accurate beam shaping a normalization for the peak target intensity $E_T^2$ in terms of the design parameters:
\begin{equation}
E_T^2=\frac{ 2 \pi k \|f(\sqrt{r})\sqrt{r}\|_{L^2}^2}{\|F_T(z)\|_{L^2}^2}E_0^2.
\end{equation}
 We also prove the existence of a minimum for this problem and in doing so obtain the regularity result that the optimal phase is the argument of an analytic function. 
%\begin{equation}
%E_T^2=\frac{ 2\pi k \left\| f(r)\sqrt{r} \right\|_{L^2}^2}{\left\| F_T(z) \right\|_{L^2}^2}E_0^2.
%\end{equation}
%Second, we prove the existence of a minimum for this problem. In proving the existence of a minimum we also obtain the regularity result that the optimal $\phi$ is the argument of an analytic function. 
Finally, in using the uncertainty principle we prove \emph{ansatz free} lower bounds on the minimum value of the functional. Indeed we prove the following necessary condition for accurate beam shaping:
\begin{equation}
\beta=\frac{2k W_TW_0r_0}{4z_d^2-W_T^2}>\pi.
\end{equation}
Although we discovered this result independently,  Romero and Dickey are to the best of our knowledge the first to make the connection between the uncertainty principle and limitations in beam shaping arising from diffraction \cite{romero1996lossless}. Our work goes beyond their treatment in that we prove in a mathematically precise manner the exact range of parameters for which the minimum value is bounded away from zero. 

In section 3 we use the stationary phase method to a construct a phase $\phi(r)$ such that solutions to  Equation (\ref{intro:GoverningEquation}) with the initial data $E(r,0)=E_0f(r)\exp(i\phi(r))$ satisfy
\begin{equation*}
\lim_{k\rightarrow \infty} |E(0,z)|=E_TF_T(z).
\end{equation*}
The phase is constructed as follows:
\begin{enumerate}
\item The following initial value problem is solved:
\begin{equation*}
\begin{cases}
\displaystyle{\frac{dz_c}{d\rho}=2\pi k \frac{E_0^2 f^2(\rho)}{E_T^2 F_T^2(z_c(\rho))} \rho}\\
z_c\left(r_0-\frac{W_0}{2}\right)=z_d-\frac{W_T}{2}
\end{cases}.
\end{equation*}
\item The phase is found by integrating:
\begin{equation*}
\phi(\rho)=-k \int_{r_0-\frac{W_0}{2}}^{\rho} \frac{u}{z_c(u)}\,du.
\end{equation*}
\end{enumerate}
A similar approach was taken by Friberg in the analysis of a logarithmic axicon lens \cite{friberg1996stationary}. While Friberg showed that the logarithmic axicon lens maps the support of $f$ into $F_T$ in the short wavelength limit ($k\rightarrow \infty$), it fails to match the target intensity. Our approach is more general in that for our construction the intensity distribution is exactly matched in the short wavelength limit. We would like to point out, however, that we are not the first to discover the use of the method of stationary phase for beam shaping. Indeed, after we successfully extended Friberg's analysis in \cite{friberg1996stationary} to our setting we discovered that Romero and Dickey developed essentially the same algorithm in the two-dimensional setting in \cite{romero1996lossless} and further developed the theory in \cite{romero2000mathematical}.

In section 4 we explore how the method of stationary phase coupled with the Gerchberg-Saxton algorithm can be applied to practical problems. We consider three specific applications. We first study the problem of creating a near-uniform intensity along the optical axis down range from a source. Specifically, we take the input beam to be a Gaussian ring and the target profile to be a $n$-th order super-Gaussian. We show that the stationary phase approximation yields a good initial guess that is further improved upon by the Gerchberg-Saxton algorithm. We also highlight the role of the uncertainty principle and show that accurate phase shaping is impossible for sufficiently small values of $W_0$ or $W_T$. We also consider in this section the problem of creating beams with oscillatory intensity profiles. We show that that as a consequence of the uncertainty principle there is a cutoff in the period of these oscillations below which phase shaping cannot adequately match the target intensity. We illustrate this by applying our algorithm to periodic intensity profiles with small scale oscillations. Finally, we show in this section how the method of stationary phase coupled with linear temporal chirping can be used to form pulses with specified temporal widths at the target distance $z_d$. Specifically, by taking a separable spatio-temporal \emph{ansatz} on the initial data we show that spatial phase shaping can be employed to correct for the additional spatial spreading of the beam arising from linear temporal chirping. 

%For filamentation of the beam to occur, it is known that the self-focusing critical power of the beam is different from the estimate for a Gaussian beam. In particular, it is known from numerical experiments that the power carried in the central core of the beam must be near the critical power of the Townes profile \cite{fibich2000critical}. Using the method of stationary phase we show that at the target distance $z_d$ the power in the core of the beam can be approximated by
%\begin{equation}
%P_C(z_d)\approx 9.8\times 2^{-\frac{1}{2n}}\Gamma\left(1+\frac{1}{2n}\right)^{-1}\frac{z_d^2}{kr_0^2 W_T^{\prime}}P_0,
%\end{equation}
%where $P_0$ is the initial power of the beam. 

We conclude with a summary and discussion of our results.

%%%%%%%%%%%%%%%%%%%%%%%%%%%
%
%		Normalization
%
%%%%%%%%%%%%%%%%%%%%%%%%%%%%
\section{Mathematical results obtained through Fourier Analysis}
In this section we use the abstract variational formulation of our problem to obtain quantitative information about our beam shaping problem. Let $\mathcal{M}$ denote the space of Lebesgue measurable functions on $\mathbb{R}$ and $C_0^{\infty}(\mathbb{R}^+)$ the space of smooth compactly supported functions on $\mathbb{R}^+$. Recall  from the introduction that we are interested in minimizing the functional $I:\mathcal{M}\mapsto \mathbb{R}$ defined by
\begin{equation} \label{Form:DefFunc}
I[\varphi]=\left \|G-\left |\mathcal{F}\left[g\exp(i \varphi)\right] \right| \right\|_{L^2},
\end{equation}
with $G,g\in C_0^{\infty}(\mathbb{R}^+)$ having support bounded away from the origin. In terms of the particular beam shaping problem we are interested in the \emph{shaper phase} is given by $\phi(r)=\varphi(r^2)$, the \emph{input profile} of the electric field is given by $f(r)=E_0^{-1}g(r^2)$ and the \emph{target profile} is given by $F_T(z)=\frac{k}{2 z}E_T^{-1} G\left( \frac{k}{2z}\right)$. In particular we will assume that the supports of $g$ and $G$ are given by the intervals $S_g(r_0,W_0)$ and $S_G(z_d,W_t)$ defined by
\begin{equation} \label{Form:Sg}
S_g(r_0,W_0) = \left \{ s \in \mathbb{R} : \left(r_0-\frac{W_0}{2}\right)^2 \leq s \leq \left(r_0+\frac{W_0}{2}\right)^2 \right\}
\end{equation}
and 
\begin{equation} \label{Form:SG}
S_G(z_d,W_T) =\left\{ \Omega \in \mathbb{R}: \frac{k}{2z_d+W_T}\leq \Omega \leq \frac{k}{2z_d-W_T}\right\}
\end{equation}
respectively where $r_0,W_0,z_d,W_T>0$. 
\subsection{Consequences of Plancherel’s theorem}
Recall Plancherel’s theorem which states that $\sqrt{2\pi} \| g\|_{L^2}=\left \| \mathcal{F}[g]\right \|_{L^2}$. The following Lemma immediately follows from Plancherel’s identity and applications of the triangle inequality and reverse triangle inequality. 
\begin{lemma} \label{Form:Thm:LowerBound1} Let $I:\mathcal{M}\mapsto \mathbb{R}$ be the functional defined by Equation (\ref{Form:DefFunc}) with $g,G\in C_0^{\infty}(\mathbb{R}^+)$ having supports $S_g(r_0,W_0)$ and $S_G(z_d,W_T)$ defined by equations (\ref{Form:Sg}) and (\ref{Form:SG}) respectively. Then,
\begin{equation*}
\left| \|G\|_{L^2} -\sqrt{2\pi}\left \|g\right\|_{L^2}\right| \leq  \inf_{\phi\in \mathcal{M}}I[\phi] \leq \|G\|_{L^2} +\sqrt{2\pi} \|g\|_{L^2}.
\end{equation*}
\end{lemma}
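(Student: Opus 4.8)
The plan is to reduce everything to a single observation: the $L^2$ energy of $\mathcal{F}\left[g\exp(i\varphi)\right]$ does not depend on the choice of phase $\varphi$. First I would note that for any measurable $\varphi$ the function $g\exp(i\varphi)$ has the same pointwise modulus as $g$, since $\left|\exp(i\varphi)\right|=1$, and therefore $\|g\exp(i\varphi)\|_{L^2}=\|g\|_{L^2}$. Applying Plancherel's theorem to the function $g\exp(i\varphi)$ then gives
\[
\left\|\mathcal{F}\left[g\exp(i\varphi)\right]\right\|_{L^2}=\sqrt{2\pi}\,\|g\exp(i\varphi)\|_{L^2}=\sqrt{2\pi}\,\|g\|_{L^2}.
\]
Since taking the modulus of a complex-valued function leaves its $L^2$ norm unchanged, we also have $\left\|\,\left|\mathcal{F}\left[g\exp(i\varphi)\right]\right|\,\right\|_{L^2}=\sqrt{2\pi}\,\|g\|_{L^2}$, a quantity independent of $\varphi$. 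This is the crucial point that makes both bounds follow at once.

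For the upper bound I would apply the ordinary triangle inequality in $L^2$ to the two functions $G$ and $\left|\mathcal{F}\left[g\exp(i\varphi)\right]\right|$, obtaining
\[
I[\varphi]=\left\|G-\left|\mathcal{F}\left[g\exp(i\varphi)\right]\right|\right\|_{L^2}\leq \|G\|_{L^2}+\left\|\,\left|\mathcal{F}\left[g\exp(i\varphi)\right]\right|\,\right\|_{L^2}=\|G\|_{L^2}+\sqrt{2\pi}\,\|g\|_{L^2}.
\]
Because this holds for every $\varphi\in\mathcal{M}$, the same upper bound holds for the infimum. For the lower bound I would instead use the reverse triangle inequality, which yields
\[
I[\varphi]\geq\left|\,\|G\|_{L^2}-\left\|\,\left|\mathcal{F}\left[g\exp(i\varphi)\right]\right|\,\right\|_{L^2}\,\right|=\left|\,\|G\|_{L^2}-\sqrt{2\pi}\,\|g\|_{L^2}\,\right|,
\]
again uniformly in $\varphi$, so the infimum is bounded below by this constant as well.

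Honestly, I do not expect a genuine obstacle here: the result is essentially immediate once one recognizes that the only role of the phase is to redistribute energy in $\Omega$-space without changing the total $L^2$ energy, so the optimization cannot push $\left\|\,|\mathcal{F}[g\exp(i\varphi)]|\,\right\|_{L^2}$ away from the fixed value $\sqrt{2\pi}\,\|g\|_{L^2}$. The compact-support and smoothness hypotheses on $g$ and $G$ are not actually needed for this particular estimate; they merely guarantee that all the $L^2$ norms in question are finite, so Plancherel and the triangle inequalities apply without qualification. The one point worth stating carefully is the passage from the pointwise-in-$\varphi$ inequalities to the corresponding bounds on $\inf_{\phi}I[\phi]$, which is just the elementary fact that a bound valid for every element of a set passes to its infimum.
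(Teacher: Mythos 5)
Your proof is correct and follows exactly the route the paper intends: the paper simply asserts that the lemma ``immediately follows from Plancherel's identity and applications of the triangle inequality and reverse triangle inequality,'' and your argument fills in precisely those steps, with the key observation that $\bigl\|\,|\mathcal{F}[g\exp(i\varphi)]|\,\bigr\|_{L^2}=\sqrt{2\pi}\,\|g\|_{L^2}$ independently of $\varphi$. Nothing further is needed.
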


If the lower bound in Lemma \ref{Form:Thm:LowerBound1} is zero, i.e. $\|G\|_{L^2}=\sqrt{2\pi}\|g\|_{L^2}$, then in terms of the design parameters we are interested in we have that
\begin{equation}
E_T^2=\frac{ 2\pi k \left\| f(r)\sqrt{r} \right\|_{L^2}^2}{\left\| F_T(z) \right\|_{L^2}^2}E_0^2. \label{Form:Eqn:Normalization}
\end{equation}
This provides a natural normalization for $E_T$ in terms of $E_0$ and the $L^2$ norms for the specific profiles of the input and target intensity patterns. It is important to note that in our initial formulation of the problem we did not specify any constraints on the target electric field. Throughout the rest of this paper we will assume this normalization and hence that $\|G\|_{L^2}=\sqrt{2\pi}\|g\|_{L^2}$ which will be a critical assumption in the stationary phase analysis as well.

%%%%%%%%%%%%%%%%%%%
%
%		Dual Variational Problems
%
%%%%%%%%%%%%%%%%%%%
\subsection{Equivalent variational problems and the existence of a minimum}
We can also use Plancherel's identity to obtain functionals that are defined over a larger admissible set but have the same infimum as $I$. These functionals are easier to analyze and we use them to prove the existence of a minimizer for our original problem. 

For $G,g\in C_0^{\infty}(\mathbb{R}^+)$, define the functional $\overline{I}:\mathcal{M}\times \mathcal{M} \mapsto \mathbb{R}^+$ by
\begin{equation}\label{Form:DefFunc2}
\overline{I}[\varphi,\Psi]=\left \| G\exp(i \Psi)- \mathcal{F}\left[ g\exp(i \varphi) \right] \right\|_{L^2}.
\end{equation}
Letting $\theta(\Omega)=\arg\left(\mathcal{F}\left[g\exp(i \varphi )\right](\Omega)\right)$ it follows for a fixed $\varphi\in \mathcal{M}$ that $I[\varphi, \Psi]$ is minimized by setting $\Psi(\Omega)=\phi(\Omega)$. Therefore, $\inf_{\Psi \in \mathcal{M}} \overline{I}[\varphi, \Psi]=I[\varphi]$ and consequently 
\begin{equation}
\inf_{\varphi, \Psi \in \mathcal{M}}\overline{I}\left[\varphi, \Psi \right]=\inf_{\varphi \in \mathcal{M}} I[\varphi]. 
\end{equation}

Let $B_1^{\infty}$ denote the unit ball in the $L^{\infty}$ norm. That is, 
\begin{equation*}
B_1^{\infty}=\left\{\zeta\in L^{\infty}: \esssup_{x\in \mathbb{R}} |\zeta(x)|\leq 1\right\}.
\end{equation*}
Define another functional $I^{\prime}:B_1^{\infty}\times B_1^{\infty}\mapsto \mathbb{R}$ by 
\begin{equation}\label{Form:DefFunc3}
-I^{\prime}[\zeta,w]=2 \Re\left(\left< Gw,\mathcal{F}\left[g\zeta\right] \right>\right).
\end{equation}
Since we are assuming $\|G\|_{L^2}=\sqrt{2\pi}\|g\|_{L^2}$ it follows that 
\begin{equation*}
\overline{I}\left[\varphi,\Psi\right]=2\left\|G\right\|_{L^2}+I^{\prime}[\exp(i\varphi),\exp(i\Psi)].
\end{equation*}
 Therefore, to study minimizers of $\overline{I}$ it is equivalent to study minimizers of $I^{\prime}$. Furthermore, we can use the bilinearity of $I^{\prime}$ to prove the existence of a minimizer for our functional of interest. 

\begin{theorem}[Existence of minimizer] \label{Thm:existence}  Let $I^{\prime}:B_1^{\infty}\times B_1^{\infty}\mapsto \mathbb{R}$ be the functional defined by Equation (\ref{Form:DefFunc3}) with $g,G\in C_0^{\infty}(\mathbb{R}^+)$ having supports $S_g(r_0,W_0)$ and $S_G(z_d,W_T)$ defined by equations (\ref{Form:Sg}) and (\ref{Form:SG})  respectively. There exists $\overline{\varphi},\overline{\Psi}\in \mathcal{M}$ such that for all $\zeta,w \in B_1^{\infty}$
\begin{equation*}
I^{\prime}\left[\exp(i\overline{\varphi}),\exp(i\overline{\Psi})\right]\leq I^{\prime}[\zeta,w].
\end{equation*}
\end{theorem}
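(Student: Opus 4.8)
The plan is to argue by the direct method. Minimizing $I'$ is equivalent to maximizing $-I'[\zeta,w]=2\Re\langle Gw,\mathcal{F}[g\zeta]\rangle$, which is bounded above: by Cauchy--Schwarz and Plancherel, $-I'[\zeta,w]\le 2\|G\|_{L^2}\sqrt{2\pi}\|g\|_{L^2}$ for all $\zeta,w\in B_1^\infty$. Since $L^\infty(\mathbb{R})=(L^1(\mathbb{R}))^*$ and $L^1(\mathbb{R})$ is separable, the ball $B_1^\infty$ is sequentially weak-$*$ compact, and hence so is $B_1^\infty\times B_1^\infty$. First I would take a maximizing sequence $(\zeta_n,w_n)$ and extract a subsequence with $\zeta_n\rightharpoonup^{*}\bar\zeta$ and $w_n\rightharpoonup^{*}\bar w$; the goal is then to show that $(\bar\zeta,\bar w)$ is itself a maximizer of $-I'$.

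The hard part is passing to the limit in the pairing, because a bilinear form is not jointly weak-$*$ continuous in general. Here the compact support of $g$ rescues the argument through a Paley--Wiener smoothing effect. For each fixed $\Omega$ the map $s\mapsto g(s)e^{-i\Omega s}$ lies in $L^1$, so $\mathcal{F}[g\zeta_n](\Omega)\to\mathcal{F}[g\bar\zeta](\Omega)$ pointwise, with the uniform bound $|\mathcal{F}[g\zeta_n](\Omega)|\le\|g\|_{L^1}$. Since $G$ is supported on the bounded set $S_G$ of finite measure, dominated convergence upgrades this to strong convergence $\mathcal{F}[g\zeta_n]\to\mathcal{F}[g\bar\zeta]$ in $L^2(S_G)$. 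Writing
\[ \langle Gw_n,\mathcal{F}[g\zeta_n]\rangle=\langle Gw_n,\mathcal{F}[g\zeta_n]-\mathcal{F}[g\bar\zeta]\rangle+\langle G(w_n-\bar w),\mathcal{F}[g\bar\zeta]\rangle, \]
the first term is bounded by $\|G\|_{L^2}\,\|\mathcal{F}[g\zeta_n]-\mathcal{F}[g\bar\zeta]\|_{L^2(S_G)}\to0$, while the second vanishes because $\overline{G}\,\mathcal{F}[g\bar\zeta]\in L^1$ and $w_n\rightharpoonup^{*}\bar w$. Hence $-I'$ is jointly weak-$*$ continuous on $B_1^\infty\times B_1^\infty$, and $(\bar\zeta,\bar w)$ attains the maximum, i.e.\ minimizes $I'$.

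Finally I would promote the minimizer to unit modulus, so that it takes the form $(\exp(i\bar\varphi),\exp(i\bar\Psi))$, using that $I'$ is separately real-linear. Fixing $\bar w$ and applying Fubini gives $-I'[\zeta,\bar w]=2\Re\int\zeta(s)\Phi_{\bar w}(s)\,ds$ with $\Phi_{\bar w}=g\,\overline{\mathcal{F}^{-1}[G\bar w]}$ up to the Fourier normalization, which over $\{|\zeta|\le1\}$ is maximized by the aligned choice $\zeta^*=\overline{\Phi_{\bar w}}/|\Phi_{\bar w}|$ wherever $\Phi_{\bar w}\ne0$. Since the maximal value of $-I'$ is strictly positive (one checks this by choosing $\zeta$ and $w$ so that the integrand is nonnegative), $\bar w$ cannot vanish on $S_G$; then $G\bar w$ is a nonzero compactly supported function, so by Paley--Wiener $\mathcal{F}^{-1}[G\bar w]$ is entire and therefore nonzero a.e., and as $g\ne0$ a.e.\ on $S_g$ we obtain $\Phi_{\bar w}\ne0$ a.e.\ there, forcing $|\zeta^*|=1$ a.e.\ on $S_g$. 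Replacing $\bar\zeta$ by $\zeta^*$ preserves the maximal value, and repeating the same step in the $w$ variable with $\zeta^*$ fixed (now $g\zeta^*$ is a nonzero compactly supported function, so $\mathcal{F}[g\zeta^*]$ is entire and nonzero a.e.) yields a unit-modulus $w^*$ with the value still maximal. Setting $\bar\varphi=\arg\zeta^*$ and $\bar\Psi=\arg w^*$ (chosen arbitrarily off the supports of $g$ and $G$, where the two factors vanish) produces the required measurable phases with $I'[\exp(i\bar\varphi),\exp(i\bar\Psi)]\le I'[\zeta,w]$ for all $\zeta,w\in B_1^\infty$. The main obstacle throughout is the joint weak-$*$ continuity of the bilinear pairing, which hinges on the analytic smoothing forced by the compact supports of $g$ and $G$.
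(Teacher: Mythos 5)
Your proof is correct and follows essentially the same route as the paper: the direct method with Banach--Alaoglu weak-$*$ compactness of $B_1^\infty$, pointwise convergence of $\mathcal{F}[g\zeta_n]$ upgraded by dominated convergence so the bilinear pairing passes to the limit, and then Paley--Wiener analyticity to force the optimal $\zeta$ and $w$ to have unit modulus almost everywhere. If anything, your write-up is slightly more explicit than the paper's at two points the paper glosses over --- the two-term splitting that handles joint weak-$*$ continuity of the pairing, and the verification that the optimal value is strictly positive so that $G\bar w$ and $g\zeta^*$ are nonzero and Paley--Wiener applies --- but the underlying argument is the same.
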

\begin{proof}

 To prove the result we proceed in two steps. First we use the direct method to show that minimizers $(\overline{\zeta},\overline{\omega})$ of $I^{\prime}$ over  $B_1^{\infty}\times B_1^{\infty}$ exist. Second we show that these minimizers satisfy $|\overline{\zeta}(s)|=1$, $|\overline{\omega}(\Omega)|=1$ $a.e.$ and hence $\left(\overline{\varphi},\overline{\Psi}\right)\in \mathcal{M}\times \mathcal{M}$ defined by $\overline{\varphi}=\arg(\overline{\zeta})$, $\overline{\Psi}=\arg(\overline{\omega})$ are the desired minimum. 

Let $(\zeta_n,\omega_n)$ be a minimizing sequence for $I^{\prime}$ over $B_1^{\infty}\times B_1^{\infty}$. That is, 
\begin{equation*}
\lim_{n\rightarrow \infty}I^{\prime}[\zeta_n,\omega_n]=\inf_{\zeta,\omega\in B_1^{\infty}}I^{\prime}[\zeta,\omega].
\end{equation*}
By the Banach-Alaoglu theorem $B_1^{\infty}$ is weak-$*$ compact and hence $B_1^{\infty}\times B_1^{\infty}$ is also compact in the product topology. Therefore, reindexing a subsequence if necessary, we can assume without loss of generality that there exists $(\overline{\zeta},\overline{\omega})\in B_1^{\infty}\times B_1^{\infty} $  such that $\zeta_{n}\overset{*}{\rightharpoonup} \overline{\zeta}$  and $\omega_{m}\overset{*}{\rightharpoonup} \overline{\omega}$ \cite{evans1990weak}.

Let $h_{n}=\mathcal{F}[g\zeta_n]$ and $h=\mathcal{F}[g\overline{\zeta}]$. From weak-$*$ convergence it follows that for fixed $\Omega$ that
\begin{align*}
\lim_{n\rightarrow \infty}h_n(\Omega)=\lim_{n\rightarrow \infty}\int_{-\infty}^{\infty}g(s)\zeta_n(s)e^{i \Omega s}\,ds=\int_{-\infty}^{\infty} g(s)\overline{\zeta}(s)e^{i \Omega s}\,ds= h(\Omega).
\end{align*}
Furthermore, for all $\Omega$ it follows from H{o}lder’s inequality that 
\begin{equation*}
|G(\Omega)h_n(\Omega)-G(\Omega)h(\Omega)|\leq 2\|G\|_{L^{\infty}} \|g\|_{L^1}. 
\end{equation*}
Therefore, by Lebesgue’s Dominated Convergence Theorem it follows that
\begin{equation*}
Gh_n \overset{L^1}{\rightarrow} Gh.
\end{equation*}
Finally, from weak-$*$ convergence of $w_n$ and the strong convergence of $Gh_n$ it follows that
\begin{equation*}
I^{\prime}[\overline{\zeta},\overline{w}]=\lim_{n\rightarrow \infty}I^{\prime}[\zeta_n,\omega_n]=\inf_{\zeta,\omega\in B_1^{\infty}}I^{\prime}[\zeta,\omega].
\end{equation*}
That is, $(\overline{\zeta},\overline{w})$ minimizes $I^{\prime}$ over $B_1^{\infty}\times B_1^{\infty}$. 

Since $g\in C_0^{\infty}(\mathbb{R}^+)$ we deduce from the Paley-Wiener Theorem that $h$ is analytic and hence has only isolated zeros \cite{paley1934fourier}. 
Let $\theta(\Omega)=\arg(h(\Omega))$ and $\psi(\Omega)=\arg(\overline{w}(\Omega))$. Therefore, 
\begin{equation*}
I^{\prime}[\overline{\zeta},\overline{w}]=-2\int_{\mathbb{R}} \left| h(\Omega) \right| \left| w(\Omega) \right| G(\Omega)\cos\left( \theta(\Omega)-\Psi(\Omega)\right)\,d\Omega
\end{equation*}
is minimized if and only if $\left| \overline{w}(\Omega)\right|=1$ a.e. and there exists an integer valued function $m(\Omega)$ such that $\theta(\Omega)=\Psi(\Omega)+2m(\Omega)\pi$. From Parseval’s equality a similar argument proves that $|\zeta(s)|=1$ a.e.. 
\end{proof}

\begin{corollary} Let $I:\mathcal{M} \mapsto \mathbb{R}^+$ be the functional defined by Equation (\ref{Form:DefFunc}) with $g,G\in C_0^{\infty}(\mathbb{R}^+)$ having supports $S_g(r_0,W_0)$ and $S_G(z_d,W_T)$ defined by equations (\ref{Form:Sg}) and (\ref{Form:SG}) respectively. 
There exists $\overline{\varphi}\in \mathcal{M}$ such that for all $\varphi\in \mathcal{M}$ 
\begin{equation*}
I[\overline{\varphi}]< I[\varphi].
\end{equation*}
\end{corollary}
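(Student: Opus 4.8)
The plan is to obtain the minimizer for $I$ essentially for free from Theorem~\ref{Thm:existence}, by unwinding the chain of reductions set up immediately before that theorem; the corollary is a bookkeeping statement, so I do not expect a substantial obstacle. Theorem~\ref{Thm:existence} already supplies phases $\overline{\varphi},\overline{\Psi}\in\mathcal{M}$ such that $(\exp(i\overline{\varphi}),\exp(i\overline{\Psi}))$ minimizes $I^{\prime}$ over $B_1^{\infty}\times B_1^{\infty}$; the remaining task is only to transport ``minimizer of $I^{\prime}$'' back to ``minimizer of $I$''.

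First I would invoke the standing normalization $\|G\|_{L^2}=\sqrt{2\pi}\|g\|_{L^2}$, under which $\overline{I}$ and $I^{\prime}$ differ by the additive constant $2\|G\|_{L^2}$, via the identity $\overline{I}[\varphi,\Psi]=2\|G\|_{L^2}+I^{\prime}[\exp(i\varphi),\exp(i\Psi)]$ recorded before the theorem. Since $\overline{\varphi},\overline{\Psi}$ minimize $I^{\prime}$ over all unimodular pairs, and every $(\varphi,\Psi)\in\mathcal{M}\times\mathcal{M}$ gives rise to such a pair $(\exp(i\varphi),\exp(i\Psi))$, it follows that $\overline{I}[\overline{\varphi},\overline{\Psi}]=\inf_{\varphi,\Psi\in\mathcal{M}}\overline{I}[\varphi,\Psi]$.

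Next I would use the two identities established earlier: for each fixed $\varphi$ the infimum of $\overline{I}[\varphi,\cdot]$ is attained at $\Psi=\arg\mathcal{F}[g\exp(i\varphi)]$ and equals $I[\varphi]$, whence $\inf_{\varphi,\Psi\in\mathcal{M}}\overline{I}=\inf_{\varphi\in\mathcal{M}}I$. Chaining these gives
\begin{equation*}
I[\overline{\varphi}]=\inf_{\Psi\in\mathcal{M}}\overline{I}[\overline{\varphi},\Psi]\leq \overline{I}[\overline{\varphi},\overline{\Psi}]=\inf_{\varphi,\Psi\in\mathcal{M}}\overline{I}[\varphi,\Psi]=\inf_{\varphi\in\mathcal{M}}I[\varphi]\leq I[\overline{\varphi}],
\end{equation*}
so equality holds throughout and $I[\overline{\varphi}]=\inf_{\varphi\in\mathcal{M}}I[\varphi]$.

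The only step carrying genuine content is the first inequality $I[\overline{\varphi}]\leq\overline{I}[\overline{\varphi},\overline{\Psi}]$, which holds simply because $I[\overline{\varphi}]$ is by definition the infimum of $\overline{I}[\overline{\varphi},\cdot]$ over all $\Psi$ and so cannot exceed its value at the particular choice $\overline{\Psi}$; everything else is substitution of already-proved equalities. I would close by noting that the displayed strict inequality $I[\overline{\varphi}]<I[\varphi]$ cannot hold at $\varphi=\overline{\varphi}$ and is evidently a typographical slip for the non-strict minimality $I[\overline{\varphi}]\leq I[\varphi]$, which is exactly what the chain above yields for every $\varphi\in\mathcal{M}$.
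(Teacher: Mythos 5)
Your proof is correct and follows essentially the same route as the paper: both transport the minimizer supplied by Theorem~\ref{Thm:existence} back through the identities linking $I^{\prime}$, $\overline{I}$, and $I$, with your explicit squeeze chain merely making precise the paper's ``without loss of generality $\Psi=\theta$'' step. You are also right that the displayed strict inequality is a typographical slip for $I[\overline{\varphi}]\leq I[\varphi]$.
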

\begin{proof}
Let $\overline{I}:\mathcal{M}\times \mathcal{M}$ be the functional defined by Equation (\ref{Form:DefFunc2}) and let $\left(\overline{\varphi},\overline{\Psi}\right)\in \mathcal{M}\times \mathcal{M}$ be the global minimizer for $\overline{I}$. If we let 
\begin{equation*}
\theta(\Omega)=\arg\left(\mathcal{F}\left[g\exp(i \varphi)\right]\right)(\Omega)
\end{equation*}
 then without loss of generality we can assume $\Psi=\theta$ and hence
\begin{equation*}
I[\overline{\varphi}]=\overline{I}[\overline{\varphi},\overline{\Psi}]\leq \inf_{\varphi\in \mathcal{M}}I[\varphi].
\end{equation*}
\end{proof}

\begin{corollary} If $I:\mathcal{M} \mapsto \mathbb{R}^+$ is the functional defined by Equation (\ref{Form:DefFunc}) with $g,G\in C_0^{\infty}(\mathbb{R}^+)$ having supports $S_g(r_0,W_0)$ and $S_G(z_d,W_T)$ defined by equations (\ref{Form:Sg}) and (\ref{Form:SG}) respectively then
\label{cor:lowerbound1}
\begin{equation*}
\inf_{\varphi \in \mathcal{M}}I[\varphi]>0.
\end{equation*}
\end{corollary}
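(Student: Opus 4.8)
The plan is to argue by contradiction, using the fact that a minimizer is already known to exist together with the analyticity of the Fourier transform of a compactly supported function. By the preceding Corollary there is a phase $\overline{\varphi}\in\mathcal{M}$ attaining the infimum of $I$, so it suffices to show $I[\overline{\varphi}]>0$. Suppose instead that $I[\overline{\varphi}]=0$. Then, writing $h=\mathcal{F}[g\exp(i\overline{\varphi})]$, the definition (\ref{Form:DefFunc}) forces $|h(\Omega)|=G(\Omega)$ for almost every $\Omega\in\mathbb{R}$.

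Next I would invoke the same regularity already used in the proof of Theorem \ref{Thm:existence}. Since $g\in C_0^{\infty}(\mathbb{R}^+)$ and $\exp(i\overline{\varphi})$ is bounded, the product $g\exp(i\overline{\varphi})$ is compactly supported and lies in $L^1\cap L^2$; hence by the Paley--Wiener theorem $h$ is the restriction to $\mathbb{R}$ of an entire function, and in particular is real-analytic on $\mathbb{R}$. On the other hand $G$ is supported on the bounded interval $S_G(z_d,W_T)$ from (\ref{Form:SG}), so the identity $|h|=G$ gives $|h(\Omega)|=0$ for a.e. $\Omega$ outside $S_G$. By continuity of $h$, this forces $h$ to vanish identically on the open set $\mathbb{R}\setminus\overline{S_G}$, which contains a nonempty interval.

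The decisive step is then that a real-analytic function vanishing on an interval must vanish identically, so $h\equiv 0$ on all of $\mathbb{R}$. By injectivity of the Fourier transform this yields $g\exp(i\overline{\varphi})=0$ a.e.; since $|\exp(i\overline{\varphi})|=1$ we conclude $g=0$ a.e., contradicting that $g$ is a nonzero nonnegative function supported on $S_g(r_0,W_0)$. Therefore $I[\overline{\varphi}]>0$, and because $\overline{\varphi}$ realizes the infimum we obtain $\inf_{\varphi\in\mathcal{M}}I[\varphi]=I[\overline{\varphi}]>0$.

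I expect the only genuine subtlety to be the analyticity argument: the relation $|h|=G$ is merely an almost-everywhere statement, so the passage to ``$h$ vanishes on an open set'' relies essentially on the continuity (indeed analyticity) of $h$ supplied by Paley--Wiener, not on the pointwise identity alone. Everything else — existence of the minimizer, injectivity of $\mathcal{F}$, and the positivity of $g$ on its support — is either established earlier in the section or immediate, so no additional machinery should be needed.
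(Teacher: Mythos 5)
Your proof is correct and follows essentially the same route as the paper: invoke the existence of a minimizer, apply Paley--Wiener to conclude that $\mathcal{F}[g\exp(i\overline{\varphi})]$ is analytic, and use the fact that an analytic function vanishing on an open set vanishes identically together with the compact support of $G$. The only cosmetic difference is that the paper phrases this as a direct quantitative bound, $I^2[\overline{\varphi}]\geq \int_{\mathbb{R}\setminus S_G}|\mathcal{F}[g\exp(i\overline{\varphi})]|^2\,d\Omega>0$, rather than as a contradiction.
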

\begin{proof}
Let $\overline{\varphi}\in \mathcal{M}$ be the global minimizer of $I$ over $\mathcal{M}$. Since $g\in C_0^{\infty}(\mathbb{R}^+)$ the Paley-Weiner theorem implies that  $\mathcal{F}[g\exp\left(i \overline{\varphi} \right)]$ is analytic. Therefore, $\mathcal{F}[g\exp\left(i \overline{\varphi} \right)]$ cannot be compactly supported on $\mathbb{R}$ and since $G\in C_0^{\infty}(\mathbb{R}^+)$ is compactly supported on $S_G(z_d,W_T)$ it follows that 
\begin{equation*}
I^2[\overline{\phi}]\geq \int_{\mathbb{R}\backslash S_G(z_d,W_T)}\left| \mathcal{F}[g\exp\left(i \overline{\varphi}\right)]\right|^2(\Omega)\,d\Omega>0.
\end{equation*}
\end{proof}

\begin{corollary}[Regularity]\label{Cor:Regularity}
Let $I:\mathcal{M} \mapsto \mathbb{R}^+$ be the functional defined by Equation (\ref{Form:DefFunc}) with $g,G\in C_0^{\infty}(\mathbb{R}^+)$ having supports $S_g(r_0,W_0)$ and $S_G(z_d,W_T)$ respectively. If $\overline{\varphi}\in \mathcal{M}$ minimizes $I$ then $\exp\left(i\overline{\varphi}(s)\right)$ is analytic.
\end{corollary}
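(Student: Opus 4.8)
The plan is to exploit the symmetry between the two slots of the product functional $I'$ and to transfer to the $s$-side exactly the analyticity argument that, in the proof of Theorem~\ref{Thm:existence}, was carried out on the $\Omega$-side for $\overline{\Psi}$. First I would promote the given minimizer $\overline{\varphi}$ of $I$ to a minimizing \emph{pair}: setting $\overline{\Psi}=\arg\left(\mathcal{F}\left[g\exp(i\overline{\varphi})\right]\right)$, the remarks preceding Theorem~\ref{Thm:existence} show that $\left(\overline{\varphi},\overline{\Psi}\right)$ minimizes $\overline{I}$ over $\mathcal{M}\times\mathcal{M}$ (equivalently $\left(\exp(i\overline{\varphi}),\exp(i\overline{\Psi})\right)$ minimizes $I'$), and in particular $|\exp(i\overline{\varphi})|=|\exp(i\overline{\Psi})|=1$ a.e.

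Next I would dualize using Plancherel's identity. Because $\mathcal{F}$ is an isometry up to the constant $\sqrt{2\pi}$, one has
\begin{equation*}
\overline{I}\left[\overline{\varphi},\overline{\Psi}\right]^2=2\pi\left\|\,g\exp(i\overline{\varphi})-\mathcal{F}^{-1}\left[G\exp(i\overline{\Psi})\right]\right\|_{L^2}^2.
\end{equation*}
Writing $F(s):=\mathcal{F}^{-1}\left[G\exp(i\overline{\Psi})\right](s)$ and expanding $\left|g\exp(i\overline{\varphi})-F\right|^2=g^2+|F|^2-2g\,\Re\!\left(\exp(i\overline{\varphi})\overline{F}\right)$, I note that minimality of $\overline{\varphi}$ for fixed $\overline{\Psi}$ forces, pointwise a.e. on the set $\{g>0\}=\mathrm{int}\,S_g$, the phase-alignment (Euler--Lagrange) condition $\exp(i\overline{\varphi}(s))=F(s)/|F(s)|$ at every $s$ where $F(s)\neq 0$. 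This is the exact $s$-space analogue of the relation $\theta=\Psi+2m\pi$ obtained on the $\Omega$-side in Theorem~\ref{Thm:existence}.

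It then remains to establish the regularity of $F$. Since $G\in C_0^{\infty}(\mathbb{R}^+)$ is supported on the compact interval $S_G(z_d,W_T)$ and $\exp(i\overline{\Psi})\in L^{\infty}$, the product $G\exp(i\overline{\Psi})$ is compactly supported and lies in $L^1\cap L^2$; by the Paley--Wiener theorem (already invoked for $h$ in Theorem~\ref{Thm:existence}), $F$ is the restriction to $\mathbb{R}$ of an entire function of exponential type, hence real-analytic on $\mathbb{R}$. As $G\exp(i\overline{\Psi})$ is not the zero function, $F\not\equiv 0$, so its zeros are isolated. I would then conclude that $\exp(i\overline{\varphi})$ coincides a.e.\ on $\mathrm{supp}\,g$ with $F/|F|$, the argument of the analytic function $F$.

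The main obstacle is the final step: $F/|F|$ is \emph{not} holomorphic, because $|F|$ involves complex conjugation, so ``analytic'' here must mean real-analytic along the real axis rather than holomorphic. To handle this I would observe that on $\mathbb{R}$ the map $s\mapsto\overline{F(s)}$ is itself real-analytic, being the restriction of the entire function obtained by conjugating the Taylor coefficients of $F$; therefore $|F|^2=F\,\overline{F}$ is real-analytic and strictly positive off the isolated zero set, so $|F|=\sqrt{|F|^2}$ and the quotient $F/|F|$ are real-analytic there. The remaining care is bookkeeping: the identity $\exp(i\overline{\varphi})=F/|F|$ holds only a.e.\ on $\mathrm{supp}\,g$ and only away from the isolated zeros of $F$, so the statement is most honestly phrased as ``$\exp(i\overline{\varphi})$ is, up to a null set, the argument of the analytic function $F$,'' which is precisely the regularity asserted in the introduction.
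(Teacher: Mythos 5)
Your proposal follows essentially the same route as the paper's proof: promote $\overline{\varphi}$ to a minimizing pair $(\overline{\varphi},\overline{\Psi})$ of $I'$, use Plancherel/Parseval to obtain the phase-alignment identity $\overline{\varphi}=\arg\left(\mathcal{F}^{-1}\left[G\exp(i\overline{\Psi})\right]\right)$ on the $s$-side, and invoke Paley--Wiener for the compactly supported $G\exp(i\overline{\Psi})$. Your added care in the last step --- distinguishing real-analyticity of $F/|F|$ away from the isolated zeros of $F$ from holomorphy, and noting the identity only holds a.e.\ on $\mathrm{supp}\,g$ --- is a correct and welcome sharpening of a point the paper states more loosely.
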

\begin{proof}
Since $\overline{\varphi}(s)$ minimizes $I$ over $\mathcal{M}$ it follows from Theorem \ref{Thm:existence} that there exists $\overline{\Psi}\in \mathcal{M}$ such that $(\overline{\varphi},\overline{\Psi})$ minimizes $I^{\prime}$. Consequently, from the same argument used in the proof of Theorem \ref{Thm:existence} and Parseval’s theorem it follows that 
\begin{equation*}
\overline{\varphi}(s)=\arg\left( \mathcal{F}^{-1}\left[G \exp\left(i \overline{\Psi}\right)\right](s)\right).
\end{equation*}
Since $G\in C_0^{\infty}(\mathbb{R}^+)$ it follows from the Paley-Wiener Theorem that $\exp(i \overline{\phi}(s))$ is analytic. 
\end{proof}

It follows from the previous corollary that the minimizers of $I$ are well behaved in the sense that they are smooth except  possibly for jump discontinuities of $2\pi$.  

%\begin{equation}
%I^{\prime}[\Psi]=\left\| \left|\mathcal{F}^{-1}\left[G(\Omega) \exp\left(i \Psi(\Omega)\right)\right](s)\right|-g(s) \right\|_{L^2}.
%\end{equation}

%Define another functional $I^{\prime}:\mathcal{M}\mapsto \mathbb{R}^{+}$ by 
%\begin{equation}
%I^{\prime}[\Psi]=\left\| \left|\mathcal{F}^{-1}\left[G(\Omega) \exp\left(i \Psi(\Omega)\right)\right](s)\right|-g(s) \right\|_{L^2}.
%\end{equation}
%This functional is essentially the same as $I$ except that it is using the inverse Fourier transform to map the target function back to the initial data. 
%
%\begin{lemma} \label{Form:Dual2} 
%\begin{equation*}
%\inf_{\phi \in \mathcal{M}}I[\phi] = \sqrt{2\pi} \inf_{\Psi \in \mathcal{M}} I^{\prime}[\Psi]\end{equation*}
%\end{lemma}
%\begin{proof}
%By  lemma \ref{Form:Dual1}, Plancherel's theorem, and a similar argument used to prove lemma \ref{Form:Dual1} it follows that
%\begin{align*}
%\inf_ {\phi\in \mathcal{M}} I[\phi] &=  \inf_{\phi, \Psi \in \mathcal{M}} \overline{I}[\phi,\Psi]\\
%&= \sqrt{2\pi}\inf_{\phi, \Psi \in \mathcal{M}}  \left \|\mathcal{F}^{-1}\left[G(\Omega)\exp\left(i \Psi(\Omega)\right)\right](s)-g(s) e^{i \phi(\sqrt{s})} \,\right\|_{L^2}\\
%&= \sqrt{2\pi}\inf_{\Psi \in \mathcal{M}} \left \| \,\left|\mathcal{F}^{-1}\left[G(\Omega)\exp\left(i \Psi(\Omega)\right)\right](s)\right|-g(s)\,\right\|_{L^2}\\
%&= \sqrt{2\pi}\inf_{\Psi \in \mathcal{M}}I^{\prime}[\Psi].
%\end{align*}
%\end{proof}

%%%%%%%%%%%%%%%%%%%%%
%
%	Uncertainty Principle
%
%%%%%%%%%%%%%%%%%%%%%

\subsection{Consequences of the Uncertainity Principle}
The uncertainty principle quantifies the impossibility of localizing both a function and its Fourier transform \cite{folland1997uncertainty}. Therefore, if the length of the support of $G$ and $g$ are both sufficiently small then the minimum value of $I$ will necessarily be significantly bounded away from zero. In particular, while Corollary \ref{cor:lowerbound1} quantifies that the minimum value of $I$ is not zero the uncertainty principle can give us quantitative information about how large the minimum value can be. This is made precise by the following theorem.

\begin{theorem} \label{thm:Unc1} Let $I:\mathcal{M}\mapsto \mathbb{R}^+$ be the functional defined by Equation (\ref{Form:DefFunc}) with $g,G\in C_0^{\infty}(\mathbb{R}^+)$ having supports $S_g(r_0,W_0)$ and $S_G(z_d,W_T)$ respectively. If $\|G\|_{L^2}=\sqrt{2\pi} \|g\|_{L^2}$, we have the lower bound
\begin{equation*}
\inf_{\varphi \in \mathcal{M}}I[\varphi]\geq \|G\|_{L^2}\left(1-\frac{1}{\sqrt{\pi}}\sqrt{\frac{2kW_Tr_0W_0}{4z_d^2-W_T^2}}\right).
\end{equation*}
\end{theorem}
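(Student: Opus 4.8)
\section*{Proof proposal}

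The plan is to reduce the statement to a concentration estimate: the energy of $h := \mathcal{F}[g\exp(i\varphi)]$ that can sit inside the narrow band $S_G$ is a controlled fraction of the total. First I would record the two normalizations that make this work. By Plancherel and $|\exp(i\varphi)|=1$ we have $\|h\|_{L^2} = \sqrt{2\pi}\,\|g\exp(i\varphi)\|_{L^2} = \sqrt{2\pi}\,\|g\|_{L^2} = \|G\|_{L^2}$, using the hypothesis $\|G\|_{L^2}=\sqrt{2\pi}\|g\|_{L^2}$. So $G$ and $|h|$ have the same total $L^2$ mass, and the only way $I[\varphi]=\|G-|h|\|_{L^2}$ can be small is if $|h|$ deposits essentially all of that mass onto $S_G$ with the right profile. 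The uncertainty principle will forbid exactly this.

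The key (and the only place the supports enter) is a pointwise bound on $h$. Since $g\exp(i\varphi)$ is supported on $S_g$, Cauchy--Schwarz gives, for every $\Omega$,
\[
|h(\Omega)| \;=\; \left|\int_{S_g} g(s)\,e^{i\varphi(s)}\,e^{-i\Omega s}\,ds\right| \;\le\; \int_{S_g}|g|\;\le\; \sqrt{|S_g|}\,\|g\|_{L^2},
\]
so $\sup_\Omega |h(\Omega)|^2 \le |S_g|\,\|g\|_{L^2}^2 = \frac{|S_g|}{2\pi}\|G\|_{L^2}^2$. Integrating this uniform bound over the band $S_G$ yields $\int_{S_G}|h|^2\,d\Omega \le \frac{|S_G|\,|S_g|}{2\pi}\,\|G\|_{L^2}^2$. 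It then remains to compute the two lengths from (\ref{Form:Sg}) and (\ref{Form:SG}): $|S_g| = (r_0+\tfrac{W_0}{2})^2-(r_0-\tfrac{W_0}{2})^2 = 2r_0W_0$ and $|S_G| = \frac{k}{2z_d-W_T}-\frac{k}{2z_d+W_T} = \frac{2kW_T}{4z_d^2-W_T^2}$. Substituting gives $\int_{S_G}|h|^2 \le c^2\,\|G\|_{L^2}^2$ with $c = \frac{1}{\sqrt\pi}\sqrt{\frac{2kW_Tr_0W_0}{4z_d^2-W_T^2}}$, precisely the quantity appearing in the claimed bound.

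To assemble the estimate I would restrict the error to $S_G$ and throw away the (nonnegative) contribution off the support of $G$: since $G$ vanishes outside $S_G$,
\[
I[\varphi]^2 = \int_{S_G}\big(G-|h|\big)^2\,d\Omega + \int_{\mathbb{R}\setminus S_G}|h|^2\,d\Omega \;\ge\; \int_{S_G}\big(G-|h|\big)^2\,d\Omega.
\]
Applying the reverse triangle inequality in $L^2(S_G)$ and using $\|G\|_{L^2(S_G)}=\|G\|_{L^2}$ together with the concentration bound $\big(\int_{S_G}|h|^2\big)^{1/2}\le c\,\|G\|_{L^2}$ gives $I[\varphi]\ge \|G\|_{L^2}-c\,\|G\|_{L^2}=\|G\|_{L^2}(1-c)$, and taking the infimum over $\varphi\in\mathcal{M}$ finishes the proof. (When $c\ge 1$ the asserted bound is vacuous because $I\ge 0$, so nothing is lost.) The computation is essentially routine once the right inequality is in hand; the one genuinely substantive point—where I expect the real content to lie—is recognizing that the relevant ``uncertainty principle'' is nothing more than the $L^\infty$ control of $h$ coming from $g\exp(i\varphi)$ being band-limited to the short interval $S_g$, and that this crude bound already produces the stated constant exactly rather than up to an uncontrolled factor. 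It is worth double-checking that no sharper (e.g.\ Donoho--Stark type) concentration inequality is needed, and that the Fourier convention matches the Plancherel normalization $\sqrt{2\pi}\|g\|_{L^2}=\|\mathcal{F}[g]\|_{L^2}$ used throughout, since the final constant is sensitive to both.
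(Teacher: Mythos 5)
Your proposal is correct and follows essentially the same route as the paper's proof: restrict the error to $S_G$, apply the reverse triangle inequality in $L^2(S_G)$, and control $\|\mathcal{F}[g e^{i\varphi}]\|_{L^2(S_G)}$ via the chain $\sqrt{|S_G|}\,\|\mathcal{F}[g e^{i\varphi}]\|_{L^\infty}\le\sqrt{|S_G|}\,\|g\|_{L^1}\le\sqrt{|S_G|\,|S_g|}\,\|g\|_{L^2}$ together with the Plancherel normalization. The constants and support lengths you compute match the paper's exactly.
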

\begin{proof}
By the reverse triangle inequality it follows that 
\begin{align*}
I[\varphi]&= \left\|G-\mathcal{F}\left[g\exp\left( i \varphi\right)\right]\right\|_{L^2}\\
&\geq  \left(\int_{S_G(z_d,W_T)} \left(G(\Omega)-\left|\mathcal{F}\left[g\exp\left( i \varphi \right)\right](\Omega)\right| \right)^2\,d\Omega \right)^{\frac{1}{2}}\\
&\geq  \left| \left(\int_{S_G(z_d,W_T)} G^2(\Omega)\,d\Omega \right)^{\frac{1}{2}} - \left(\int_{S_G(z_d,W_T)} \left|\mathcal{F}\left[g\exp\left( i \varphi \right)\right](\Omega)\right|^2 \,d\Omega \right)^{\frac{1}{2}} \right|\\
&= \left| \|G\|_{L^2} - \left(\int_{S_G(z_d,W_T)} \left|\mathcal{F}\left[g \exp\left( i \varphi \right)\right](\Omega)\right|^2 \,d\Omega \right)^{\frac{1}{2}} \right|.\\
\end{align*}
Furthermore, by direct calculation, an application of the Cauchy-Schwarz inequality and Lemma \ref{Form:Eqn:Normalization} we have that
\begin{align*}
\left(\int_{S_G(z_d,W_T)} \left|\mathcal{F}\left[g \exp\left( i \varphi \right)\right](\Omega)\right|^2 \,d\Omega \right)^{\frac{1}{2}} &\leq \sqrt{ \frac{2k W_T}{4z_d^2-W_T^2}}\left \| \mathcal{F}\left[g \exp\left( i \varphi \right)\right] \right\|_{L^{\infty}}\\
& \leq \sqrt{ \frac{2k W_T}{4z_d^2-W_T^2}}\|g(s)\|_{L^1}\\
&= \sqrt{ \frac{2k W_T}{4z_d^2-W_T^2}}\int_{S_g(r_0,W_0)}\left|g(s)\right|\,ds\\
&\leq \sqrt{ \frac{4k W_Tr_0W_0}{4z_d^2-W_T^2}}\|g \|_{L^2}\\
&= \sqrt{ \frac{4k W_Tr_0W_0}{4z_d^2-W_T^2}}\frac{1}{\sqrt{2\pi}}\|G \|_{L^2}.
\end{align*}
\end{proof}

Using the same notation as in \cite{romero1996lossless} we define the dimensionless parameter $\beta$ by
\begin{equation*}
\beta=\frac{2kr_0W_TW_0}{4z_d^2-W_T^2}.
\end{equation*}
It then follows from Theorem~\ref{thm:Unc1}, that $\displaystyle{I[\varphi]\geq \|G\|_{L^2}\left(1-\sqrt{\frac{\beta}{\pi}}\right)}$, for any choice of the phase function $\varphi$. In particular, if $\beta < \pi$, there is no way of using phase shaping to match the target profile. 

We emphasize that the result is only a lower bound for the functional $I[\varphi]$, and in particular it does not follow that if the parameters $k,r_0,W_0,W_T$ and $z_d$ are such that $\beta > \pi$, then the infimum of $I$ is zero. Also, this result is somewhat crude in that it is independent of the exact forms of $G$ and $g$. Below is a refinement of Theorem \ref{thm:Unc1} that takes into account the $L^1$ norms of $g$ and $G$.

\begin{theorem}\label{thm:Unc2} Let $I:\mathcal{M}\mapsto \mathbb{R}^+$ be the functional defined by Equation (\ref{Form:DefFunc}) with $g,G\in C_0^{\infty}(\mathbb{R}^+)$ having supports $S_g(r_0,W_0)$ and $S_G(z_d,W_T)$ defined by equations (\ref{Form:Sg}) and (\ref{Form:SG}) respectively.
\begin{enumerate}
\item If  $\|G\|_{L^2}=\sqrt{2\pi} \|g\|_{L^2}$ 
%%and 
%\begin{equation*}
%\displaystyle{\frac{kW_T}{4z_d^2-W_T^2}\|g\|_{L^1}^2\leq \pi \|g\|_{L^2}^2}
%\end{equation*}
then
\begin{equation*}
\inf_{\varphi \in \mathcal{M}}I[\varphi]\geq \sqrt{2\pi}\|g\|_{L^2} \left(1- \sqrt{\frac{2kW_T}{4z_d^2-W_T^2}}\frac{\|g\|_{L^1}}{\left\|g\right\|_{L^2}}\right).
\end{equation*} 
\item If If $\|G\|_{L^2}=\sqrt{2\pi} \|g\|_{L^2}$ 
%and 
%\begin{equation*}
%r_0W_0\|G\|_{L^1}^2\leq \pi \|G\|_{L^2}^2
%\end{equation*}
then
\begin{equation*}
\inf_{\varphi \in \mathcal{M}}I[\varphi]\geq \left\|G\right\|_{L^2}\left( 1 - \sqrt{\frac{r_0W_0}{\pi}}\frac{\|G\|_{L^1} }{\left\| G \right\|_{L^2}}\right).
\end{equation*}
\end{enumerate}
\end{theorem}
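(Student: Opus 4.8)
The plan is to prove both parts as refinements of Theorem~\ref{thm:Unc1}, each following the same skeleton---reverse triangle inequality to isolate a ``leakage'' term, followed by a measure/Hölder estimate---but retaining the $L^1$ norm instead of collapsing it to an $L^2$ norm, and, for the second part, working on the spatial variable rather than the frequency variable.

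For part (i), I would follow the proof of Theorem~\ref{thm:Unc1} verbatim through the reverse triangle inequality,
\[
I[\varphi] \geq \left| \|G\|_{L^2} - \left(\int_{S_G(z_d,W_T)}\left|\mathcal{F}[g\exp(i\varphi)](\Omega)\right|^2 d\Omega\right)^{1/2}\right|,
\]
and bound the leakage integral by $|S_G|^{1/2}\|\mathcal{F}[g\exp(i\varphi)]\|_{L^\infty}$, where $|S_G| = 2kW_T/(4z_d^2 - W_T^2)$. The single change is to stop at this point: rather than invoking the Cauchy--Schwarz estimate $\|g\|_{L^1}\le\sqrt{2r_0W_0}\,\|g\|_{L^2}$ that produces the cruder Theorem~\ref{thm:Unc1}, I would keep the sharper bound $\|\mathcal{F}[g\exp(i\varphi)]\|_{L^\infty}\le \|g\exp(i\varphi)\|_{L^1}=\|g\|_{L^1}$ as it stands. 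Substituting the normalization $\|G\|_{L^2}=\sqrt{2\pi}\|g\|_{L^2}$ and factoring it out of the bracket then yields the stated inequality.

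For part (ii), I would instead transfer the entire functional to the spatial side. By Plancherel's identity,
\[
I[\varphi] = \left\|G - \mathcal{F}[g\exp(i\varphi)]\right\|_{L^2} = \sqrt{2\pi}\left\|\mathcal{F}^{-1}[G] - g\exp(i\varphi)\right\|_{L^2}.
\]
Restricting the $L^2$ norm to the support $S_g(r_0,W_0)$ and applying the reverse triangle inequality gives
\[
I[\varphi] \geq \sqrt{2\pi}\left|\left(\int_{S_g}|g|^2\, ds\right)^{1/2} - \left(\int_{S_g}|\mathcal{F}^{-1}[G]|^2\, ds\right)^{1/2}\right|.
\]
Since $g$ is supported in $S_g$, the first term is exactly $\sqrt{2\pi}\|g\|_{L^2}=\|G\|_{L^2}$; for the second I would use $\int_{S_g}|\mathcal{F}^{-1}[G]|^2 \le |S_g|\,\|\mathcal{F}^{-1}[G]\|_{L^\infty}^2$ with $|S_g|=2r_0W_0$ and $\|\mathcal{F}^{-1}[G]\|_{L^\infty}\le \frac{1}{2\pi}\|G\|_{L^1}$. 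Collecting the $2\pi$ factors produces the coefficient $\sqrt{r_0W_0/\pi}$ and the claimed bound.

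The computations are routine; the only genuine content is the structural choice of which side to restrict to and where to stop. The main thing to get right is the bookkeeping of the $2\pi$ normalization: in part (ii) one must use $\mathcal{F}^{-1}[G](s)=\frac{1}{2\pi}\int G(\Omega)e^{-i\Omega s}d\Omega$, so the $L^\infty\le L^1$ estimate carries a $\frac{1}{2\pi}$, while Plancherel contributes a $\sqrt{2\pi}$, and these combine to give precisely $\sqrt{r_0W_0/\pi}$ rather than some other multiple. There is no deeper obstacle: both bounds are strictly sharper than Theorem~\ref{thm:Unc1} exactly because they avoid the lossy step $\|h\|_{L^1}\le\sqrt{|\mathrm{supp}\,h|}\,\|h\|_{L^2}$, and they degenerate back to it precisely when $g$ (respectively $G$) is flat on its support.
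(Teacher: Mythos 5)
Your treatment of item 1 is correct and is exactly the paper's argument: one truncates the proof of Theorem \ref{thm:Unc1} before the Cauchy--Schwarz step $\|g\|_{L^1}\le\sqrt{2r_0W_0}\,\|g\|_{L^2}$. (In fact what you obtain, $\sqrt{2\pi}\|g\|_{L^2}-\sqrt{|S_G|}\,\|g\|_{L^1}$, is slightly stronger than the stated bound and implies it.)

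Item 2, however, has a genuine gap at its very first step. The functional is $I[\varphi]=\|G-|\mathcal{F}[g e^{i\varphi}]|\|_{L^2}$, with the modulus inside; it is \emph{not} equal to $\|G-\mathcal{F}[g e^{i\varphi}]\|_{L^2}$, so Plancherel cannot be applied the way you apply it. Worse, the inequality you would actually get goes the wrong way: pointwise $|Ge^{i\Psi}-H|\geq \bigl|G-|H|\bigr|$ for any real $\Psi$ (since $G\geq 0$), so
\begin{equation*}
I[\varphi]\;\leq\;\left\|G-\mathcal{F}[g e^{i\varphi}]\right\|_{L^2}=\sqrt{2\pi}\,\left\|\mathcal{F}^{-1}[G]-g e^{i\varphi}\right\|_{L^2},
\end{equation*}
and a lower bound on the right-hand side says nothing about $I[\varphi]$. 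The paper's fix is the auxiliary functional $\overline{I}[\varphi,\Psi]=\|Ge^{i\Psi}-\mathcal{F}[g e^{i\varphi}]\|_{L^2}$ of Equation (\ref{Form:DefFunc2}), which satisfies $I[\varphi]=\inf_{\Psi}\overline{I}[\varphi,\Psi]$, the infimum being attained at $\Psi=\arg\mathcal{F}[g e^{i\varphi}]$. One then runs precisely your argument on $\overline{I}[\varphi,\Psi]$: Plancherel, restriction of the integral to $S_g$, the reverse triangle inequality, and the estimate $\|\mathcal{F}^{-1}[Ge^{i\Psi}]\|_{L^{\infty}}\leq\frac{1}{2\pi}\|Ge^{i\Psi}\|_{L^1}=\frac{1}{2\pi}\|G\|_{L^1}$, which is \emph{uniform in} $\Psi$; taking the infimum over $\Psi$ at the end transfers the bound to $I$. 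Your $2\pi$ bookkeeping and the computation $|S_g|=2r_0W_0$ are otherwise correct, so the repair consists only of inserting the phase $e^{i\Psi}$ and observing that the final estimate does not depend on it.
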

\begin{proof}
Item 1 follows from the proof of Theorem \ref{thm:Unc1} and Plancherel’s identity. To prove item 2 we note that from Plancherel’s identity and the triangle inequality that
\begin{equation*}
\overline{I}[\varphi,\Psi]\geq  \sqrt{2\pi}  \left|\left(\int_{S_g(r_0,W_0)} \left| \mathcal{F}^{-1}[G\exp\left(i \Psi \right)](s)\right|^2\,ds\right)^{\frac{1}{2}}-\|g\|_{L^2}\right|.
\end{equation*}
Following the same arguments as in the proof of Theorem \ref{thm:Unc1} it follows that
\begin{equation*}
\left(\int_{S_g(r_0,W_0)} \left| \mathcal{F}^{-1}[G \exp\left(i \Psi \right)](s)\right|^2\,ds\right)^{\frac{1}{2}}\leq \frac{1}{2\pi} \sqrt{2r_0W_0}\|G \|_{L^1}.
\end{equation*}
The result follows from applying Plancherel’s identity to $\|g\|_{L^2}$.
\end{proof}

Theorems \ref{thm:Unc1} and \ref{thm:Unc2} are estimates for the global error in the beam shaping problem in terms of the design parameters. The same techniques can be used to obtain local information as well. Specifically, the uncertainty principle implies that if $G$ has sufficiently small scale features within its support then the infimum of $I$ will be bounded away from zero. To make this statement precise we look at the local error for an interval of width $W_T^{\prime}$ centered at $z_d^{\prime}$ on the optical axis. Specifically,  for all $W_T^{\prime}, z_d^{\prime} \in \mathbb{R}^+$ satisfying 
\begin{equation}
2z_d^{\prime}-W_T^{\prime}>2z_d-W_T \text{ and } 2z_d^{\prime}-W_T^{\prime}< 2z_d-W_T
\end{equation}
define $I^{W_T^{\prime}}_{z_d^{\prime}}: \mathcal{M}\mapsto \mathbb{R}^{+}$ by
\begin{align}\label{Form:Func4}
I^{W_T^{\prime}}_{z_d^{\prime}}[\varphi]&=\left \|G-\left |\mathcal{F}\left[g\exp(i \varphi)\right] \right| \right\|_{L^2(S_G(z_d^{\prime},W_T^{\prime}))}\\
&=\left(\int_{S_G(z_d^{\prime},W_T^{\prime})}\left(G(\Omega)-\left |\mathcal{F}\left[g\exp(i \varphi)\right](\Omega) \right|\right)^2\,d\Omega\right)^{\frac{1}{2}},
\end{align}
which is a local measure of the $L^2$ error on the interval $|z-z_d^{\prime}|< W_T^{\prime}$ along the optical axis. The following corollary follows from the same arguments used to prove Theorem \ref{thm:Unc1}.
\begin{corollary} \label{cor:unc1} Let $I_{z_d^{\prime}}^{W_T^{\prime}}:\mathcal{M}\mapsto \mathbb{R}^+$ be the functional defined by Equation (\ref{Form:Func4}) with $g,G\in C_0^{\infty}(\mathbb{R}^+)$ having supports $S_g(r_0,W_0)$ and $S_G(z_d,W_T)$ defined by equations (\ref{Form:Sg}) and (\ref{Form:SG}) respectively. Suppose $z_d^{\prime}, W_T^{\prime}$ satisfy $2z_d^{\prime}-W_T^{\prime}>2z_d-W_T$ and $2z_d^{\prime}-W_T^{\prime}< 2z_d-W_T$. If $\|G\|_{L^2}=\sqrt{2\pi} \|g\|_{L^2}$ and 
\begin{equation*}
\frac{2kW_T^{\prime}W_0r_0}{4\left(z_d^{\prime}\right)^2-\left(W_T^{\prime}\right)^2}\frac{\|G\|_{L^2}^2}{\|G(\Omega)\|_{L^2(S_G(z_d^{\prime},W_T^{\prime}))}^2}< \pi,
\end{equation*}
then 
\begin{equation*}
\inf_{\varphi \in \mathcal{M}}I_{z_d^{\prime}}^{W_T^{\prime}}[\varphi]\geq \|G\|_{L^2(S_G(z_d^{\prime},W_T^{\prime}))}-\frac{1}{\sqrt{\pi}}\sqrt{\frac{2kW_T^{\prime}r_0z_d^{\prime}}{4\left(z_d^{\prime}\right)^2-\left(W_T^{\prime}\right)^2}}\|G\|_{L^2}.
\end{equation*}
\end{corollary}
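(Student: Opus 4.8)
The plan is to run the proof of Theorem~\ref{thm:Unc1} essentially verbatim, with the single change that every integral over $S_G(z_d,W_T)$ is replaced by an integral over the smaller window $S_G(z_d',W_T')$. First I would apply the reverse triangle inequality on $L^2\big(S_G(z_d',W_T')\big)$ to split off the cross term, obtaining
\[
I_{z_d'}^{W_T'}[\varphi] \geq \left| \,\|G\|_{L^2(S_G(z_d',W_T'))} - \left(\int_{S_G(z_d',W_T')}\left|\mathcal{F}\left[g\exp(i\varphi)\right](\Omega)\right|^2\,d\Omega\right)^{1/2}\,\right|.
\]
The entire content of the corollary is then a bound on the second term that is \emph{uniform} in $\varphi$, so that the absolute value can be resolved into the stated one-sided estimate.

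To control that term I would first record the Lebesgue measure of the local window. Applying the definition (\ref{Form:SG}) with $z_d',W_T'$ in place of $z_d,W_T$ gives, by a direct calculation,
\[
\left|S_G(z_d',W_T')\right| = \frac{k}{2z_d'-W_T'} - \frac{k}{2z_d'+W_T'} = \frac{2kW_T'}{4(z_d')^2-(W_T')^2}.
\]
Bounding the integrand by its supremum yields $\big(\int_{S_G(z_d',W_T')}|\mathcal{F}[g\exp(i\varphi)]|^2\big)^{1/2}\leq |S_G(z_d',W_T')|^{1/2}\,\|\mathcal{F}[g\exp(i\varphi)]\|_{L^\infty}$. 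Exactly as in Theorem~\ref{thm:Unc1}, the standard bound $\|\mathcal{F}[g\exp(i\varphi)]\|_{L^\infty}\leq \|g\|_{L^1}$ holds since $|\exp(i\varphi)|=1$, and the Cauchy-Schwarz inequality together with the measure $|S_g(r_0,W_0)| = 2r_0W_0$ of the support of $g$ gives $\|g\|_{L^1}\leq \sqrt{2r_0W_0}\,\|g\|_{L^2}$.

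Chaining these three estimates and invoking the normalization $\|g\|_{L^2}=(2\pi)^{-1/2}\|G\|_{L^2}$ converts the $\|g\|_{L^2}$ factor into $\|G\|_{L^2}$ and collapses all constants into the single radical $\tfrac{1}{\sqrt\pi}\sqrt{2kW_T'r_0W_0/(4(z_d')^2-(W_T')^2)}$. Substituting this uniform upper bound back into the reverse triangle inequality produces the claimed estimate, and the hypothesis $\frac{2kW_T'W_0r_0}{4(z_d')^2-(W_T')^2}\frac{\|G\|_{L^2}^2}{\|G\|_{L^2(S_G(z_d',W_T'))}^2}<\pi$ is precisely the condition that the subtracted term is strictly smaller than $\|G\|_{L^2(S_G(z_d',W_T'))}$, which is what permits dropping the absolute value and guarantees a positive bound. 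I expect no genuine obstacle here beyond the window-measure computation, since the argument is a localization of Theorem~\ref{thm:Unc1}; the only points requiring care are (i) verifying that the containment of $S_G(z_d',W_T')$ inside $S_G(z_d,W_T)$ makes $\|G\|_{L^2(S_G(z_d',W_T'))}$ a meaningful local quantity, and (ii) matching the numerator of the final radical, which the computation above pins down as $r_0W_0$ --- in agreement with the $W_0$ already appearing in the hypothesis.
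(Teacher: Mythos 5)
Your proposal is correct and is exactly the localization argument the paper intends: the paper gives no separate proof, stating only that the corollary ``follows from the same arguments used to prove Theorem \ref{thm:Unc1},'' and your chain (reverse triangle inequality on the local window, $\|\mathcal{F}[g e^{i\varphi}]\|_{L^\infty}\leq\|g\|_{L^1}$, Cauchy--Schwarz with $|S_g|=2r_0W_0$ and $|S_G(z_d^{\prime},W_T^{\prime})|=2kW_T^{\prime}/(4(z_d^{\prime})^2-(W_T^{\prime})^2)$, then the normalization) is precisely that argument. Your computation also correctly pins the numerator of the final radical as $r_0W_0$, consistent with the corollary's hypothesis, which indicates that the $r_0 z_d^{\prime}$ appearing in the printed conclusion is a typographical slip.
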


\begin{corollary} \label{cor:unc2} Let $I_{z_d^{\prime}}^{W_T^{\prime}}:\mathcal{M}\mapsto \mathbb{R}^+$ be the functional defined by Equation (\ref{Form:Func4}) with $g,G\in C_0^{\infty}(\mathbb{R}^+)$ having supports $S_g(r_0,W_0)$ and $S_G(z_d,W_T)$ defined by equations (\ref{Form:Sg}) and (\ref{Form:SG}) respectively. Suppose $z_d^{\prime}, W_T^{\prime}$ satisfy $2z_d^{\prime}-W_T^{\prime}>2z_d-W_T$ and $2z_d^{\prime}-W_T^{\prime}< 2z_d-W_T$. If $\|G\|_{L^2}=\sqrt{2\pi} \|g\|_{L^2}$ and 
\begin{equation*}
\frac{2kW_T^{\prime}}{4\left(z_d^{\prime}\right)^2-\left(W_T^{\prime}\right)^2}\|g\|_{L^1}^2\leq \|G\|_{L^2(S_G(z_d^{\prime},W_T^{\prime}))}^2,
\end{equation*}
then 
\begin{equation*}
\inf_{\varphi \in \mathcal{M}}I_{z_d^{\prime}}^{W_T^{\prime}}[\varphi]\geq \|G\|_{L^2(S_G(z_d^{\prime},W_T^{\prime}))}-\sqrt{\frac{2kW_T^{\prime}}{4\left(z_d^{\prime}\right)^2-\left(W_T^{\prime}\right)^2}}\|g\|_{L^1}.
\end{equation*}
\end{corollary}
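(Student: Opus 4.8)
The plan is to reproduce the argument of Theorem~\ref{thm:Unc1} verbatim, but with every estimate carried out on the sub-interval $S_G(z_d^{\prime},W_T^{\prime})$ instead of on all of $\mathbb{R}$, and then to invoke the stated hypothesis to discharge the absolute value generated by the reverse triangle inequality. The two displayed inequalities on $z_d^{\prime},W_T^{\prime}$ are meant to encode the containment $S_G(z_d^{\prime},W_T^{\prime})\subseteq S_G(z_d,W_T)$, which is what makes $\|G\|_{L^2(S_G(z_d^{\prime},W_T^{\prime}))}$ a genuine local measure of the target mass; I would state this containment explicitly at the outset.

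First I would apply the reverse triangle inequality to the $L^2(S_G(z_d^{\prime},W_T^{\prime}))$ norm defining $I^{W_T^{\prime}}_{z_d^{\prime}}$, obtaining
\[
I^{W_T^{\prime}}_{z_d^{\prime}}[\varphi]\geq\left|\,\|G\|_{L^2(S_G(z_d^{\prime},W_T^{\prime}))}-\left(\int_{S_G(z_d^{\prime},W_T^{\prime})}\bigl|\mathcal{F}[g\exp(i\varphi)](\Omega)\bigr|^2\,d\Omega\right)^{\frac12}\right|.
\]
Next I would bound the transform term exactly as in Theorem~\ref{thm:Unc1}: the Lebesgue measure of $S_G(z_d^{\prime},W_T^{\prime})$ is $\tfrac{2kW_T^{\prime}}{4(z_d^{\prime})^2-(W_T^{\prime})^2}$, computed directly from the endpoints in (\ref{Form:SG}) with $z_d,W_T$ replaced by $z_d^{\prime},W_T^{\prime}$, so pulling out the sup of the integrand and using $\|\mathcal{F}[g\exp(i\varphi)]\|_{L^\infty}\leq\|g\exp(i\varphi)\|_{L^1}=\|g\|_{L^1}$ (since $|e^{i\varphi}|=1$) gives
\[
\left(\int_{S_G(z_d^{\prime},W_T^{\prime})}\bigl|\mathcal{F}[g\exp(i\varphi)]\bigr|^2\right)^{\frac12}\leq\sqrt{\frac{2kW_T^{\prime}}{4(z_d^{\prime})^2-(W_T^{\prime})^2}}\,\|g\|_{L^1}.
\]

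Finally, the hypothesis $\tfrac{2kW_T^{\prime}}{4(z_d^{\prime})^2-(W_T^{\prime})^2}\|g\|_{L^1}^2\leq\|G\|_{L^2(S_G(z_d^{\prime},W_T^{\prime}))}^2$ says precisely that this upper bound does not exceed $\|G\|_{L^2(S_G(z_d^{\prime},W_T^{\prime}))}$. Consequently the quantity inside the absolute value above is nonnegative, the absolute value may be dropped, and the claimed lower bound follows immediately. There is no substantive obstacle here; the proof is a direct transcription of the global argument. The only two points requiring genuine care are (i) the measure computation for the restricted interval, and (ii) verifying that the sign hypothesis is exactly what is needed to resolve the reverse-triangle absolute value in the favorable direction rather than leaving an uninformative $\geq 0$ bound.
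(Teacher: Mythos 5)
Your proposal is correct and is exactly the argument the paper intends: the paper gives no separate proof of Corollary~\ref{cor:unc2}, asserting only that it follows from the same reasoning as Theorem~\ref{thm:Unc1}, and your transcription (reverse triangle inequality on $L^2(S_G(z_d^{\prime},W_T^{\prime}))$, the measure computation $|S_G(z_d^{\prime},W_T^{\prime})|=\tfrac{2kW_T^{\prime}}{4(z_d^{\prime})^2-(W_T^{\prime})^2}$, the bound $\|\mathcal{F}[ge^{i\varphi}]\|_{L^{\infty}}\leq\|g\|_{L^1}$, and the sign hypothesis to discharge the absolute value) is precisely that reasoning, stopping at $\|g\|_{L^1}$ rather than continuing to Cauchy--Schwarz as in the global theorem. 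Your reading of the (typo-afflicted) containment hypotheses as encoding $S_G(z_d^{\prime},W_T^{\prime})\subseteq S_G(z_d,W_T)$ is also the intended one.
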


\begin{remark} \label{form:Remark} The local error $I^{W_T^{\prime}}_{z_d^{\prime}}$ also provides a lower bound for the global error $I$. Indeed, if we let 
\begin{equation*}
\mathcal{A}=\left \{ (z_d^{\prime}, W_T^{\prime})\in \mathbb{R}^+\times \mathbb{R}^+: 2z_d^{\prime}-W_T^{\prime}\geq 2z_d-W_T \text{ and } 2z_d^{\prime}+W_T^{\prime}\leq 2z_d+W_T \right\}
\end{equation*}
then a sharper estimate for the lower bound on the global error can be found by considering:
\begin{align}
\inf_{\varphi \in \mathcal{M}}I[\varphi]&\geq \sup_{ (z_d^{\prime}, W_T^{\prime})\in \mathcal{A} } \inf_{\varphi \in \mathcal{M}}I^{W_T^{\prime}}_{z_d^{\prime}}[\varphi]\\
& \geq  \sup_{(z_d^{\prime},W_T^{\prime})\in \mathcal{A}} \left(\|G\|_{L^2(S_G(z_d^{\prime},W_T^{\prime}))}-\frac{1}{\sqrt{\pi}}\sqrt{\frac{2kW_Tr_0z_d^{\prime}}{4\left(z_d^{\prime}\right)^2-\left(W_T^{\prime}\right)^2}}\|G\|_{L^2}\right). \nonumber
\end{align}
\end{remark}

\section{Stationary Phase Construction}
Recall that the method of stationary phase can be used to obtain asymptotic expansions for integrals of the form
\begin{equation*}
F(\lambda)=\int_{-\infty}^{\infty} e^{i \lambda H(x)}h(x)\,dx,
\end{equation*}
where $\lambda>0$, $x$ is real, $H$ is a smooth real valued function and $h$ is a smooth but not necessarily analytic complex valued function. If $H$ has one critical point at $x_c$, i.e. $\left. \frac{dH}{dx}\right|_{x_c}=0$, and $\left. \frac{d^2H}{dx^2}\right|_{x_c}\neq 0$ then 
\begin{equation*}
F(\lambda)=\exp\left(i \lambda H(x_c)+i \left(\left.\frac{d^2 H}{dx^2}\right|_{x_c}\right)\right)\sqrt{\frac{2\pi}{\lambda \left.\frac{d^2 H}{dx^2}\right|_{x_c}}}h(x_c)+\mathcal{O}\left(\frac{1}{\lambda}\right)
\end{equation*}
as $\lambda\rightarrow \infty$ \cite{miller2006applied}. In this section we show how the method of stationary phase can be used to approximate the on-axis profile of the electric field in the limit of short wavelengths and we show how this approximation can be used to construct initial guesses for the optimal phase.

\subsection{Short wavelength limit}
First, we must define precisely what we mean by short wavelengths in terms
of the input parameters and the length scales in the parameter. Using the
geometry of the problem as in Figure~\ref{fig:Besselsubfig1},  we define
the dimensionless quantities
\begin{equation}
\begin{array}{cc}
\displaystyle{\varrho=\frac{\rho-r_0}{W_0}}, &
\displaystyle{\zeta=\frac{z-z_d}{W_T}}.\\
\end{array}
\end{equation}
Equation (\ref{Intro:OpticalAxis}) gives the on-axis electric field:
% Split this equation into 2 lines
\begin{align}
E(0,z_d+W_T\zeta) =&-\frac{ik E_0 W_0}{z_d + W_T
\zeta}\int_{-\frac{1}{2}}^{\frac{1}{2}}f(r_0 + W_0 \varrho) \nonumber \\
& \times \exp\left(i \left( \phi+k \frac{(r_0+ W_0\varrho)^2}{2 (z_d + W_T
\zeta)}\right)\right)(r_0+ W_0 \varrho)\,d\varrho,
\end{align}
where we used the fact that the input profile $E(r,0)$ is only supported
on $[r_0-W_0/2, r_0+W_0/2]$.

Assuming $W_0/r_0 \ll 1, W_T/z_d \ll 1$, we expand the phase in powers of
$W_0/r_0$ and $W_T/z_d$ to obtain %replaced 1 1 by 1
\begin{align*}
\phi+k \frac{(r_0+ W_0\varrho)^2}{2 (z_d + W_T \zeta)} & = \phi + \frac{k
r_0^2}{2 z_d} - \frac{k r_0^2 W_T \zeta}{2 z_d^2} +  \frac{k r_0 W_0
\varrho}{z_d} - \frac{k r_0 W_0 W_T \zeta \varrho}{ z_d^2} + \cdots .
\end{align*}
Note that the first three terms in the expansion of $\frac{k \rho^2}{2 z}$
do not depend  on both $\varrho$ and $\zeta$ jointly. %changed wording
Consequently, they can be absorbed as phase factors in $E_0(r)$ or
$E(0,z)$ as below, and they {\em do not affect the intensity} of the input
$E_0(r)$ or the on-axis profile $E(0,z)$. Dropping the higher order terms
in $W_0/r_0$ and $W_T/z_d$, we get
\begin{align*}
z E(0,z) e^{i \frac{k r_0^2 W_T \zeta}{2 z_d^2}} \approx & -ik E_0 W_0
e^{i \frac{k r_0^2}{2 z_d}} \int_{-\frac{1}{2}}^{\frac{1}{2}}\left[ f(r_0
+ W_0 \varrho) e^{i \frac{k r_0 W_0 \varrho}{z_d}}\right] \\
& \times  e^{i \left(\phi -  \frac{k r_0 W_0 W_T \zeta \varrho}{ z_d^2}
\right)} (r_0+ W_0 \varrho)\,d\varrho.
\end{align*}
 % split equation into 2 lines, found one $\bar{\rho}$ which I replaced
with $\varrho$.
 In order to use phase variations for optical beam shaping, it is clear
that the optical path differences over the relevant region
$(\varrho,\zeta)$ should be many times the wavelength, so that the beam
can interfere with itself. This allows for the possibility of controlling
the intensity of the beam as a function of position. Since the variations
in $\zeta$ and $\varrho$ are $O(1)$ by construction, the preceding
calculation determines the correct notion of the short wavelength limit
in our problem, namely
\begin{equation*}
2 \kappa=\frac{k r_0 W_0 W_T}{ z_d^2}  \gg 2 \pi.
\end{equation*}
We include an additional factor of  2 in defining $\kappa$ so that for
$W_T/z_d \ll 1$, the dimensionless parameter $\beta$, whose definition was
motivated by Theorem~\ref{thm:Unc1} {\em is identical} to the
non-dimensional wave number $\kappa$ arising from the analysis of beam
shaping using phase variations. This argument therefore gives an
alternative physical interpretation of the parameter $\beta$ -- It is the
phase variation within the beam arising from the different optical path
lengths for different parts of the beam in our geometry
(Fig.~\ref{fig:Besselsubfig1}).

\subsection{Stationary phase approximation}
We now assume $W_0/r_0\ll 1$, $W_T/z_d\ll 1$, $\beta \gg \pi$ and formally apply the method of stationary phase to approximate the on-axis electric field. Define dimensionless quantities by
\begin{equation}
\begin{array}{cccc}
\displaystyle{\bar{\rho}=\frac{\rho}{r_0}}, & \displaystyle{\bar{z}=\frac{z}{z_d}}, & \displaystyle{\bar{k}=\frac{r_0^2k}{z_d}}, & \displaystyle{\bar{\phi}=\frac{z_d}{k r_0^2} \phi},
\end{array}
\end{equation}
then by Equation (\ref{Intro:OpticalAxis}) it follows that the on-axis profile of solutions to the paraxial wave equation  is given by
\begin{equation}
E(0,\bar{z})=-\frac{\bar{k}E_0}{\bar{z}}\int_0^{\infty}f(\bar{\rho})\exp\left(i \bar{k} \left( \bar{\phi}+\frac{\bar{\rho}^2}{2 \bar{z}}\right)\right)\bar{\rho}\,d\bar{\rho}.
\end{equation}

If $\overline{\phi}$ is a given smooth function, then for a fixed value of $\bar{z}$ stationary points of the phase can be found by solving the equation
\begin{equation}\label{Stat:StatPointEquation}
\frac{d \bar{\phi}}{d \bar{\rho}}+\frac{\bar{\rho}}{\bar{z}}=0
\end{equation}
for $\bar{\rho}$. If $\overline{\phi}$ is a monotone decreasing function in $\bar{\rho}$ and $\bar{\phi}$ is either strictly convex or concave then there exists one solution to this equation for each value of $\overline{z}$  and we can implicitly define a function $\bar{\rho}_c(\bar{z})$ by
\begin{equation}
\bar{\rho}_c(\bar{z})=-\bar{z}\left.\frac{d \bar{\phi}}{d \bar{\rho}}\right|_{\rho_c(\bar{z})}.
\end{equation}
Therefore, if we let $\Psi\left(\bar{\rho},\bar{z}\right)= \bar{\phi} + \bar{\rho}^2/(2\bar{z})$ then by the method of stationary phase it follows that if  
\begin{equation}\label{Stat:ConditionSecDerivative}
\left.\frac{d^2 \Psi}{d \bar{\rho}^2}\right|_{\bar{\rho}_c(\bar{z})}\neq 0
\end{equation}
 then 
\begin{equation} \label{Stat:Approximation1}
\frac{\bar{z}}{E_0}\left| E(0,\bar{z}) \right| = \sqrt{\frac{ 2\pi \bar{k}}{ \left| \frac{ \partial^2 \Psi}{\partial \bar{\rho}^2}\right|_{\bar{\rho}_c(\bar{z})}}}f\left(\bar{\rho}_c(\bar{z})\right)\bar{\rho}_c(\bar{z})+\mathcal{O}\left(1\right)
\end{equation}
as $\bar{k}\rightarrow \infty$.

Equation (\ref{Stat:Approximation1}) gives quantitive information about how rays of light coming from the ring-beam are mapped to the optical axis in the short wavelength limit. In particular, in the short wavelength limit the function $\bar{\rho}_c(\bar{z})$ can be thought of as a mapping between points on the optical-axis and the input plane. However, we are interested in the inverse relationship. Since $\bar{\phi}$ is either strictly convex or concave it follows by differentiating Equation (\ref{Stat:StatPointEquation}) with respect to $\bar{\rho}$ that Equation (\ref{Stat:ConditionSecDerivative}) is equivalent to the condition that $\bar{\rho}_c(\bar{z})$ is either monotone increasing or decreasing. Therefore, it follows from the inverse function theorem that $\bar{\rho}_c(\bar{z})$ is invertible with inverse $\bar{z}_{c}(\bar{\rho})$ and by Equation (\ref{Stat:StatPointEquation}) we have that 
\begin{equation}\label{Stat:PhaseDifferentialEquation}
\frac{d\bar{\phi}}{d \bar{\rho}}=-\frac{\bar{\rho}}{\bar{z}_c(\bar{\rho})}.
\end{equation}
Consequently,
\begin{equation}
\frac{d^2 \Psi}{d \bar{\rho}^2}=\frac{\bar{\rho}}{\bar{z}_c^2({\bar{\rho})}}\frac{d \bar{z}_c}{d\bar{\rho}}.
\end{equation}
Therefore, in terms of $\bar{z}_c(\bar{\rho})$ and Equation (\ref{Stat:Approximation1}) the on-axis electric field can be approximated by
\begin{equation}\label{Stat:Approximation2}
|E\left(0,\bar{z}\right)|\approx E_0 \sqrt{2\pi \bar{k}} f(\bar{\rho}_c(z))\bar{\rho}_c^{\frac{1}{2}}(\bar{z})\left(\left.\frac{d \bar{z}_{c}}{d\bar{\rho}}\right|_{\bar{\rho}_c(\bar{z})}\right)^{-\frac{1}{2}}.
\end{equation}

\subsection{Algorithm for stationary phase approximation}
Equation (\ref{Stat:Approximation2}) can be used to construct an accurate approximation to an optimal $\bar{\phi}$. Formally, if we set $|E(0,z)|=E_T F_T(z)$ then the following initial value problem must be satisfied:
\begin{equation}\label{Stat:zDifferentialEquation}
\begin{cases}
\displaystyle{\frac{d \bar{z}_c}{d \bar{\rho}}=2\pi \bar{k} \frac{E_0^2f^2(\bar{\rho})}{E_T^2 F_T^2(\bar{z}_c)}\bar{\rho}}\\
\bar{z}_c\left(1-\frac{W_0}{2r_0}\right)=1-\frac{W_T}{2z_d}
\end{cases}.
\end{equation}
The phase corresponding to $\bar{z}_c$ can then be found by integration:
\begin{equation}
\bar{\phi}(\bar{\rho})=-\int_{1-\frac{W_0}{2r_0}}^{\bar{\rho}}\frac{u}{\bar{z}_c(u)}\,du. \label{Stat:phaseDifferentialEquation}
\end{equation}
equations (\ref{Stat:zDifferentialEquation}) and (\ref{Stat:phaseDifferentialEquation}) are a simple system of differential equations whose solution we will use as an approximation to the near optimal phase.

Solutions to (\ref{Stat:zDifferentialEquation}) must also satisfy the additional physical constraint that $\bar{z}_c\left(1+\frac{W_0}{2r_0}\right)=1+\frac{W_T}{2z_d}$, that is, the stationary phase approximation maps the support of $f$ to the support of $F_T$. The following lemma shows that this condition is met if the normalization (\ref{Form:Eqn:Normalization}) is satisfied and again highlights the important role this normalization takes.
\begin{lemma}
If $\bar{z}_c$ is a solution to the initial value problem (\ref{Stat:zDifferentialEquation}) and $E_T$ and $E_0$ satisfy Equation (\ref{Form:Eqn:Normalization}) then $\bar{z}_c\left(1+\frac{W_0}{2r_0}\right)=1+\frac{W_T}{2z_d}$.
\end{lemma}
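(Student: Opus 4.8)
The plan is to separate variables in the initial value problem~(\ref{Stat:zDifferentialEquation}), integrate across the entire support of the input profile, and then recognize the resulting integral identity as a restatement of the normalization~(\ref{Form:Eqn:Normalization}).

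First I would write~(\ref{Stat:zDifferentialEquation}) in separated form as
\[
E_T^2 F_T^2(\bar{z}_c)\,d\bar{z}_c = 2\pi\bar{k}E_0^2 f^2(\bar{\rho})\,\bar{\rho}\,d\bar{\rho},
\]
and introduce the two monotone ``accumulation'' functions
\[
P(\bar{\rho}) = 2\pi\bar{k}E_0^2\int_{1-\frac{W_0}{2r_0}}^{\bar{\rho}} f^2(u)\,u\,du,
\qquad
Q(\bar{z}) = E_T^2\int_{1-\frac{W_T}{2z_d}}^{\bar{z}} F_T^2(v)\,dv.
\]
Differentiating $Q(\bar{z}_c(\bar{\rho}))$ and substituting the differential equation gives $\frac{d}{d\bar{\rho}}Q(\bar{z}_c(\bar{\rho})) = \frac{dP}{d\bar{\rho}}$; since both sides vanish at the initial point $\bar{\rho}=1-\frac{W_0}{2r_0}$ (where $\bar{z}_c=1-\frac{W_T}{2z_d}$), I obtain the exact relation $Q(\bar{z}_c(\bar{\rho}))=P(\bar{\rho})$ along the whole solution.

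Next I would evaluate this relation at the right endpoint $\bar{\rho}=1+\frac{W_0}{2r_0}$ of the support of $f$, where $P$ attains its full value $2\pi\bar{k}E_0^2\int f^2(u)\,u\,du$ over that support. A change of variables $r=r_0\bar{\rho}$, $z=z_d\bar{z}$ together with $\bar{k}=r_0^2 k/z_d$ shows that the normalization~(\ref{Form:Eqn:Normalization}) is precisely the dimensionless identity $2\pi\bar{k}E_0^2\int f^2(u)\,u\,du = E_T^2\int F_T^2(v)\,dv$, the integrals being taken over the full supports. Writing $Z:=\bar{z}_c\!\left(1+\frac{W_0}{2r_0}\right)$, the relation therefore reduces to $Q(Z)=E_T^2\int F_T^2(v)\,dv$; that is, the target intensity accumulated up to $Z$ equals its total over the whole support of $F_T$.

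It remains to deduce $Z=1+\frac{W_T}{2z_d}$, and this is where the only real subtlety lies. Because $f$ and $F_T$ both vanish at the edges of their supports, the right-hand side of~(\ref{Stat:zDifferentialEquation}) degenerates to a $0/0$ form there, so I would argue entirely through the integrated relation rather than the differential equation itself. Since the right-hand side of the ODE is nonnegative, $\bar{z}_c$ is continuous and nondecreasing; since $F_T^2>0$ on the interior of its support, the map $Q$ is strictly increasing there and constant (equal to $E_T^2\int F_T^2$) for $\bar{z}\geq 1+\frac{W_T}{2z_d}$. The equality $Q(Z)=E_T^2\int F_T^2$ thus forces $Z\geq 1+\frac{W_T}{2z_d}$. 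To rule out overshoot, let $\bar{\rho}^*$ be the first $\bar{\rho}$ at which $\bar{z}_c(\bar{\rho})=1+\frac{W_T}{2z_d}$; then $P(\bar{\rho}^*)=Q(\bar{z}_c(\bar{\rho}^*))=E_T^2\int F_T^2 = P\!\left(1+\frac{W_0}{2r_0}\right)$, and since $f$ does not vanish identically near the right endpoint of its support, $P$ is strictly increasing there, so $\bar{\rho}^*$ can only equal $1+\frac{W_0}{2r_0}$. Continuity and monotonicity of $\bar{z}_c$ then give $Z=1+\frac{W_T}{2z_d}$. The main effort is this no-overshoot monotonicity argument; the matching of the two total integrals is immediate once~(\ref{Form:Eqn:Normalization}) is read in dimensionless form.
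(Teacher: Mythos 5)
Your proposal is correct and follows essentially the same route as the paper: integrate the separated ODE to obtain the accumulated-intensity identity $Q(\bar{z}_c(\bar\rho))=P(\bar\rho)$, invoke the normalization (\ref{Form:Eqn:Normalization}) to equate the two total integrals, and then rule out undershoot and overshoot at the right endpoint by strict monotonicity of the accumulation functions (the paper does this via the same two-case contradiction with a first-crossing point $r^*$). Your write-up is, if anything, slightly more careful about the degenerate $0/0$ behaviour of the ODE at the support edges, but the underlying argument is the same.
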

\begin{proof} Suppose $\bar{z}_c$ is a solution to the initial value problem (\ref{Stat:zDifferentialEquation}) and $\bar{z}_c\left(1+\frac{W_0}{2r_0}\right)=z^*$. From the normalization (\ref{Form:Eqn:Normalization}) and the fact that $z_c$ solves (\ref{Stat:zDifferentialEquation}) it follows that
\begin{align*}
\int_{1-\frac{W_T}{2z_d}}^{\bar{z}_c(\bar{\rho})}F_T^2(\bar{z})\,d\bar{z}&=2\pi \bar{k} \frac{E_0^2}{E_T^2}\int_{1-\frac{W_0}{2r_0}}^{\bar{\rho}}f^2(u)u\,du\\
\Rightarrow \frac{\int_{1-\frac{W_T}{2z_d}}^{\bar{z}_c(\bar{\rho})}F_T^2(z)\,dz}{\|F_T(z)\|_{L^2}^2}&= \frac{\int_{1-\frac{W_0}{2r_0}}^{\bar{\rho}}f^2(u)u\,du}{\|f(r)\sqrt{r}\|_{L^2}^2}.
\end{align*}
Now, if $z^*< 1+\frac{W_0}{2r_0}$ then 
\begin{equation*}
1 >  \frac{\int_{1-\frac{W_T}{2z_d}}^{\bar{z}_c(\bar{\rho})}F_T^2(z)\,dz}{\|F_T(z)\|_{L^2}^2}= \frac{\int_{1-\frac{W_0}{2r_0}}^{1+\frac{W_0}{2r_0}}f^2(u)u\,du}{\|f(r)\sqrt{r}\|_{L^2}^2}=1.
\end{equation*}
If $z^*> 1+\frac{W_0}{2r_0}$ then since $\bar{z}_c(\bar{\rho})$ is a continuous monotone increasing function of $\rho$ there exists $r^*< 1+\frac{W_0}{2r_0}$ such that $z_c(r^*)=1+\frac{W_T}{2z_d}$. Consequently,
\begin{equation*}
1= \frac{\int_{1-\frac{W_T}{2z_d}}^{\bar{z}_c(r^*)}F_T^2(z)\,dz}{\|F_T(z)\|_{L^2}^2}<\frac{\int_{1-\frac{W_0}{2r_0}}^{1+\frac{W_0}{2r_0}}f^2(u)u\,du}{\|f(r)\sqrt{r}\|_{L^2}^2}=1.
\end{equation*}
Therefore it follows that $z^*=1+\frac{W_T}{2z_d}$.
\end{proof}

\subsection{Off-axis electric field}
The full electric field near the Bessel-zone $E(r,z)$ can also be approximated in the focal region $|z-z_d|\lesssim W_T/2$ using the method of stationary phase. In the dimensionless coordinates we obtain through the method of stationary phase that:
\begin{equation}\label{Stat:OffAxis}
|E(r,\bar{z})|\approx E_TF_T(\bar{z})J_0\left(\frac{\bar{k}r}{2\bar{z}r_0}\bar{\rho}_c(\bar{z})\right).
\end{equation}
That is, the beam essentially forms a Bessel-beam in the focal region. However, the function $J_0$ is also a rapidly oscillating function and the above approximation is only valid for radii satisfying $r \sim 2\bar{z}r_0/(\bar{k} \bar{\rho}_c(\bar{z}))$. This expression is not applicable outside the focal region, and of course does not cannot capture the transition of the ring beam into a Bessel-Gauss beam and back into a ring beam. 

\section{Applications}
\subsection{Remote Delivery of Bessel Beams}
In this subsection we use the method of stationary phase coupled with the Gerchberg-Saxton algorithm to obtain approximations to the optimal phase when the input profile $f(r)$ is Gaussian and the target profile $F_T(z)$ is super-Gaussian. Specifically, for $n$ a positive integer and $W_0^{\prime},W_T^{\prime}>0$ we assume that 
\begin{equation}\label{BB:functions}
f(r)=\exp\left(-\frac{(r-r_0)^2}{W_0^{\prime\, 2}}\right) \text{ and } F_T(z)=\exp\left(-\frac{(z-z_d)^{2n}}{W_T^{\prime\, 2n}}\right).
\end{equation}
Since we are using input and target functions that decay exponentially fast we can use Laplace’s method \cite{miller2006applied} to obtain accurate estimates for the normalization. Directly applying Laplace’s method it follows that
\begin{equation}\label{BB:W0Norm}
\|f(r)\sqrt{r}\|_{L^2}^2=W_0^{\prime}r_0\sqrt{\frac{\pi}{2}}+\mathcal{O}\left(\frac{W_0^{\prime\,2}}{r_0^2}\right) 
\end{equation}
and explicitly calculating we have that
\begin{equation}\label{BB:WTNorm}
\left\|F_T(z)\right\|_{L^2}^2=W_T^{\prime}\frac{2}{2^{\frac{1}{2n}}}\Gamma \left(1+\frac{1}{2n}\right).
\end{equation}
Therefore, by Equation (\ref{Form:Eqn:Normalization}) it follows that
\begin{equation}\label{BB:PeakIntensity}
\frac{E_T^2}{E_0^2} \approx \sqrt{2} \pi^{\frac{3}{2}}C(n)\frac{k r_0 W_0^{\prime}}{W_T^{\prime}},
\end{equation}
where $C(n)=2^{\frac{1}{2n}-1}\left(\Gamma \left(1+\frac{1}{2n}\right)\right)^{-1}$ is bounded between $\frac{1}{2}\sqrt{\frac{\pi}{2}}$ and $\sqrt{\frac{\pi}{2}}$. Equation (\ref{BB:PeakIntensity}) gives the ratio of the peak intensity of the on-axis profile of the beam to the peak intensity of the input beam.

The input and target functions defined above are not smooth bump functions and strictly speaking the theory and numerical algorithm we developed in the previous two sections does not apply. However, the functions given by Equation (\ref{BB:functions}) are Schwartz class and they can be very accurately approximated by non-smooth functions with compact support which in turn can be approximated by smooth functions with compact support \cite{lieb2001analysis}. In particular, we truncate these functions by forcing them to be zero for values in which the function is less than $e^{-9}\approx 10^{-4}$. This gives us the following estimates for the width of the supports
\begin{equation}\label{BB:widths}
W_0=6W_0^{\prime} \text{ and } W_T=  2\cdot 3^{\frac{1}{n}}W_T^{\prime}.
\end{equation}

With the above normalization, equations (\ref{Stat:zDifferentialEquation}) and (\ref{Stat:phaseDifferentialEquation}) can be numerically solved to construct  a guess for an optimal phase function $\phi$. Using $\phi$ to initialize the Gerchberg-Saxton algorithm, further corrections to the stationary phase construction can be obtained. To illustrate the potential applicability of our techniques to remote laser ablation, we applied the algorithm outlined in the previous section with 100 iterations of the Gerchberg-Saxton algorithm for various input and target widths with the following parameters fixed
\begin{equation}\label{BB:parameters}
\begin{array}{cccc}
z_d=1000m, & r_0=.3m, & k= 9.5\times 10^6 m^{-1}, & n=4.
\end{array}
\end{equation}
The exact profile along the optical axis is then obtained by applying the fast Fourier transform (FFT) to Equation (\ref{Form:FourierTrans}). In Figure \ref{fig:BesselTargetWidth} we plot the normalized value of $I$ for both the stationary phase analysis and the Gerchberg-Saxton algorithm for $W_0^{\prime}=.07m$ and $W_T^{\prime}$ ranging from $1m$ to $100m$. For this range of parameters, $\beta$ varies from $\beta=.4$ to $\beta=40$. The inset plot in Figure \ref{fig:BesselTargetWidth} illustrates a subset of the profiles obtained for $W_T^{\prime}=10m,\,50m,\,100m$.  In Figure \ref{fig:BesselInputWidth} we plot the normalized value of $I$ for both the stationary phase analysis and the stationary phase analysis coupled with the Gerchberg-Saxton algorithm for $W_0^{\prime}$ ranging from $.001m$ to $.1m$ and $W_T^{\prime}=20m$. For this range of parameters, $\beta$ varies from $\beta=.1$ to $\beta=10$. The inset plot in Figure  \ref{fig:BesselInputWidth} illustrates the intensity profile obtained for $W_0^{\prime}=.05m$.

\begin{figure}[htp]
        \centering
        \subfigure[][]{
                \includegraphics[width=\textwidth]{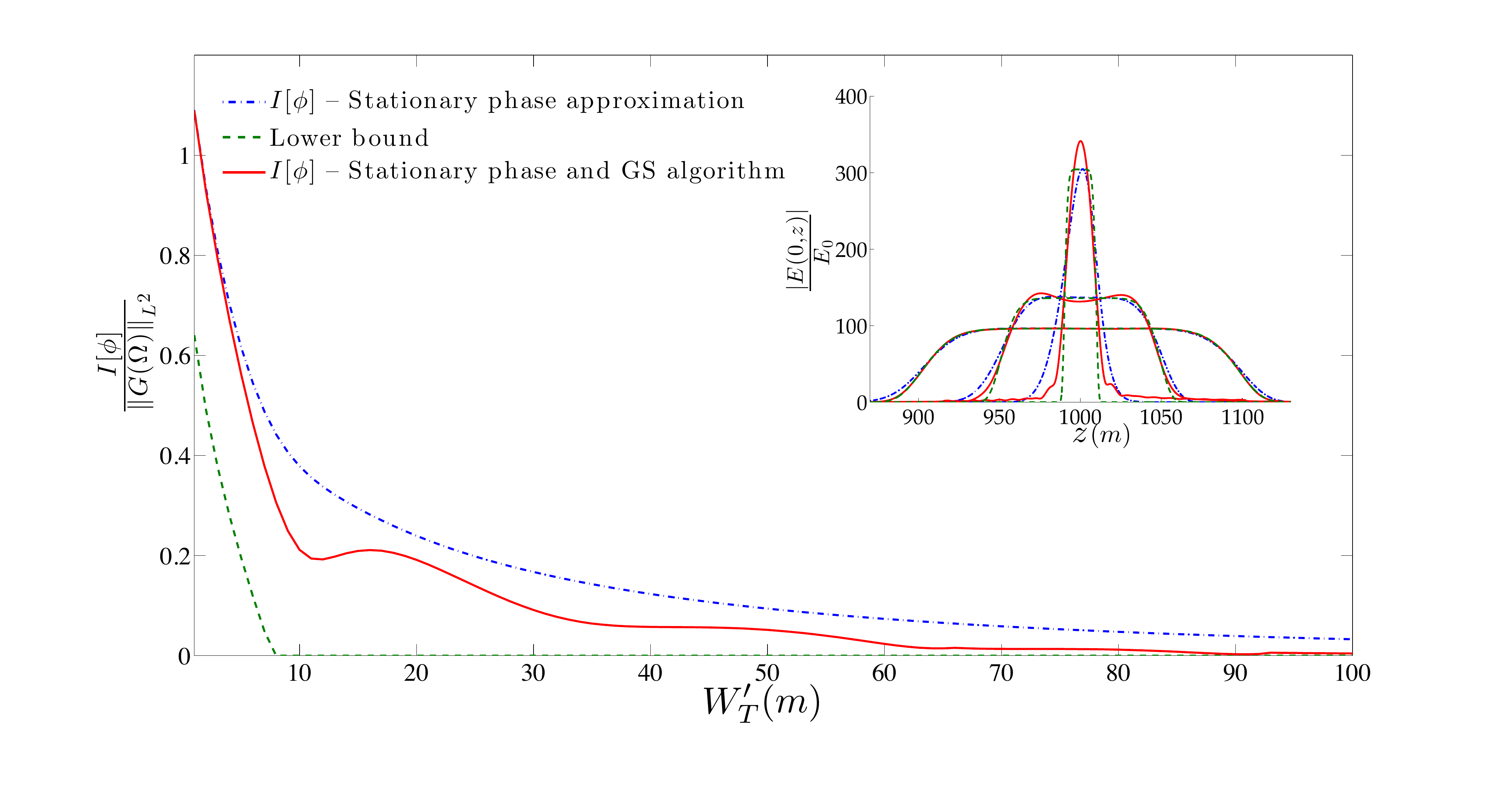}
	\label{fig:BesselTargetWidth}}
        \subfigure[][]{
                \includegraphics[width=\textwidth]{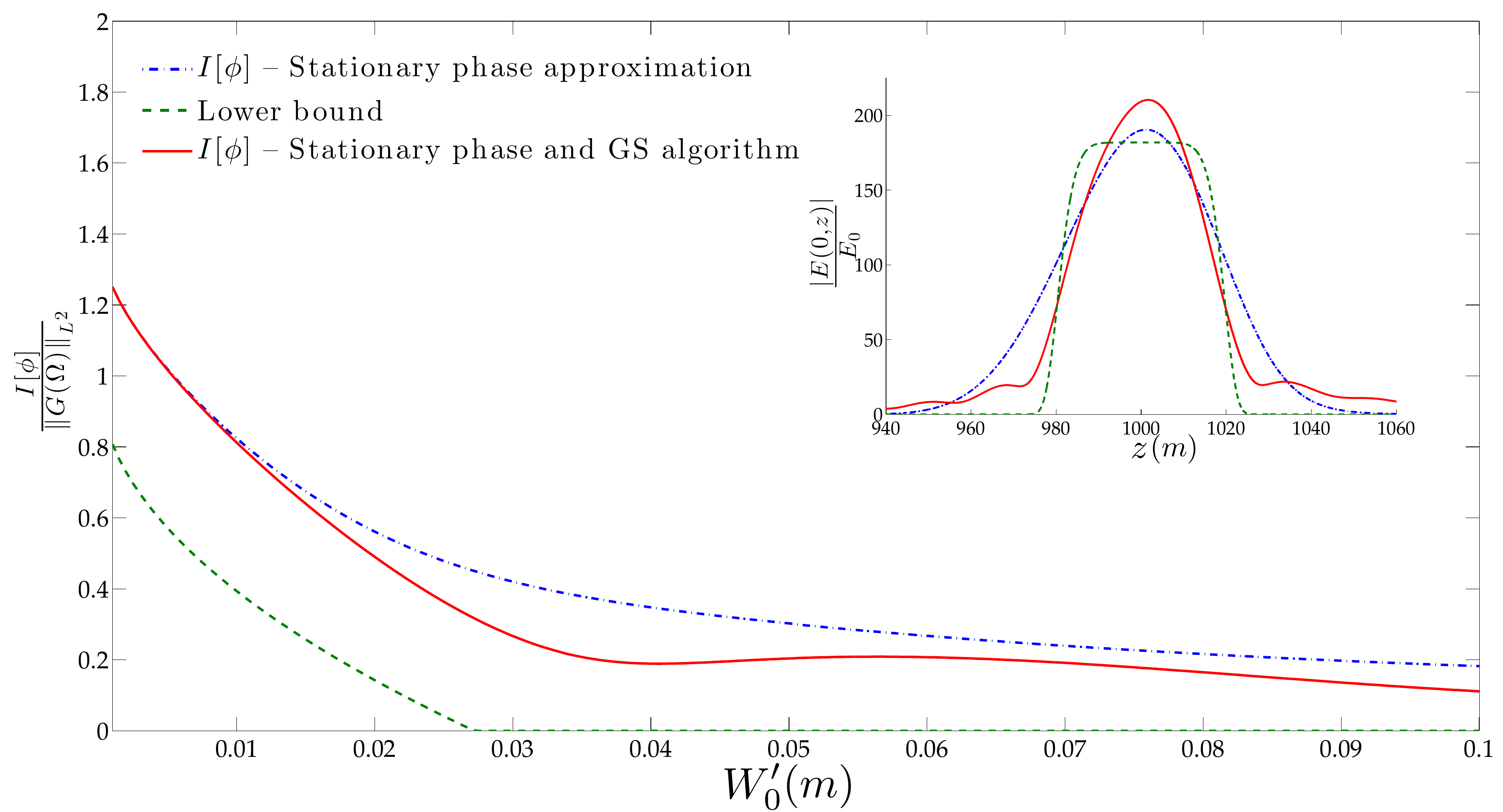}
                \label{fig:BesselInputWidth}}
                \caption{The normalized value of the functional $I$ for phase functions obtained using the stationary phase analysis as an initial guess for the Gerchberg-Saxton algorithm. The fixed parameters in both figures are $r_0=.3m$, $z_d=1000m$ and $n=4$. In the inset plots within each figure the blue dashed-dotted curves are plots of sample on-axis intensities obtained through the stationary phase analysis alone, the solid red curves are the on-axis intensities obtained by coupling the stationary phase algorithm with the Gerchberg-Saxton algorithm and the green dashed curves are the target intensity profile. \subref{fig:BesselTargetWidth}  The value of $I$ for $W_0^{\prime}=.07m$ fixed and $W_T^{\prime}$ ranging from $1m$ to $100m$. The selected intensity profiles in the inset plot  have target widths corresponding to $W_T^{\prime}=5m,$ $50.0m$ and $100m$. \subref{fig:BesselInputWidth} The value of $I$ for $W_T^{\prime}=20m$ fixed and $W_0^{\prime}$ range from $.001m$ to $.1m$. \label{fig:RemoteDelivery}}
\end{figure}

Figures  \ref{fig:BesselTargetWidth} and \ref{fig:BesselInputWidth} also illustrate the important role that $\beta$ not only plays in terms of the lower bound presented in Theorem \ref{thm:Unc1} but also the accuracy of the stationary phase approximation. For $W_T^{\prime}<8m$ $(\beta=3)$, the lower bound in Theorem \ref{thm:Unc1} is relevant and as is guaranteed by Theorem \ref{thm:Unc1} neither the stationary phase approximation nor the Gerchberg-Saxton algorithm, or indeed any other algorithm can yield accurate shaping of the beam. For $W_T^{\prime}$ ranging from $10m$ to $60m$ ($\beta=4$ to $\beta=24$) Theorem \ref{thm:Unc1} no longer applies and the stationary phase analysis yields a guess that is further improved upon by the Gerchberg-Saxton algorithm. For $W_T^{\prime}>60m$ ($\beta=24$), the stationary phase analysis yields a very accurate guess that is only slightly improved upon by the Gerchberg-Saxton algorithm.

In Figures \ref{fig:phaseplot1} and \ref{fig:phaseplot2} we plot surface and contour plots of the phase function found by coupling the stationary phase approximation with the Gerchberg-Saxton algorithm for the parameters given by Equation (\ref{BB:parameters}) with $W_0^{\prime}=.07m$ and $W_T^{\prime}$ ranging from $5m$ to $100m$. Specifically, we plot
\begin{equation}
\phi_{z_d}(r)=\phi(r)-\frac{kr^2}{2z_d}.
\end{equation}
The term $kr^2/2z_d$ corresponds to a linear shift in the Fourier variable $\Omega$ and also has an interpretation in optics as the effect of a lens that focuses a beam at the distance $z_d$ \cite{goodman2005introduction}. Interestingly, when $W_T^{\prime}=5m$ ($\beta=.4$) the phase $\phi_{z_d}$  is essentially constant. That is, except for the focusing term $kr^2/2z_d$ additional phase shaping cannot improve upon the effect of a focusing lens. However, for $W_T^{\prime}=100m$   ($\beta=40$) there is a significant difference between the numerically obtained phase and the focusing term. From Figures \ref{fig:statphaseplot1} and \ref{fig:statphaseplot2} it is again evident that the phase obtained from that Gerchberg-Saxton algorithm converges in the limit of $W_T\rightarrow \infty$ to the one obtained using the stationary phase alone.

\begin{figure}[htp]
        \centering
        \subfigure[][]{
                \includegraphics[width=.47\textwidth]{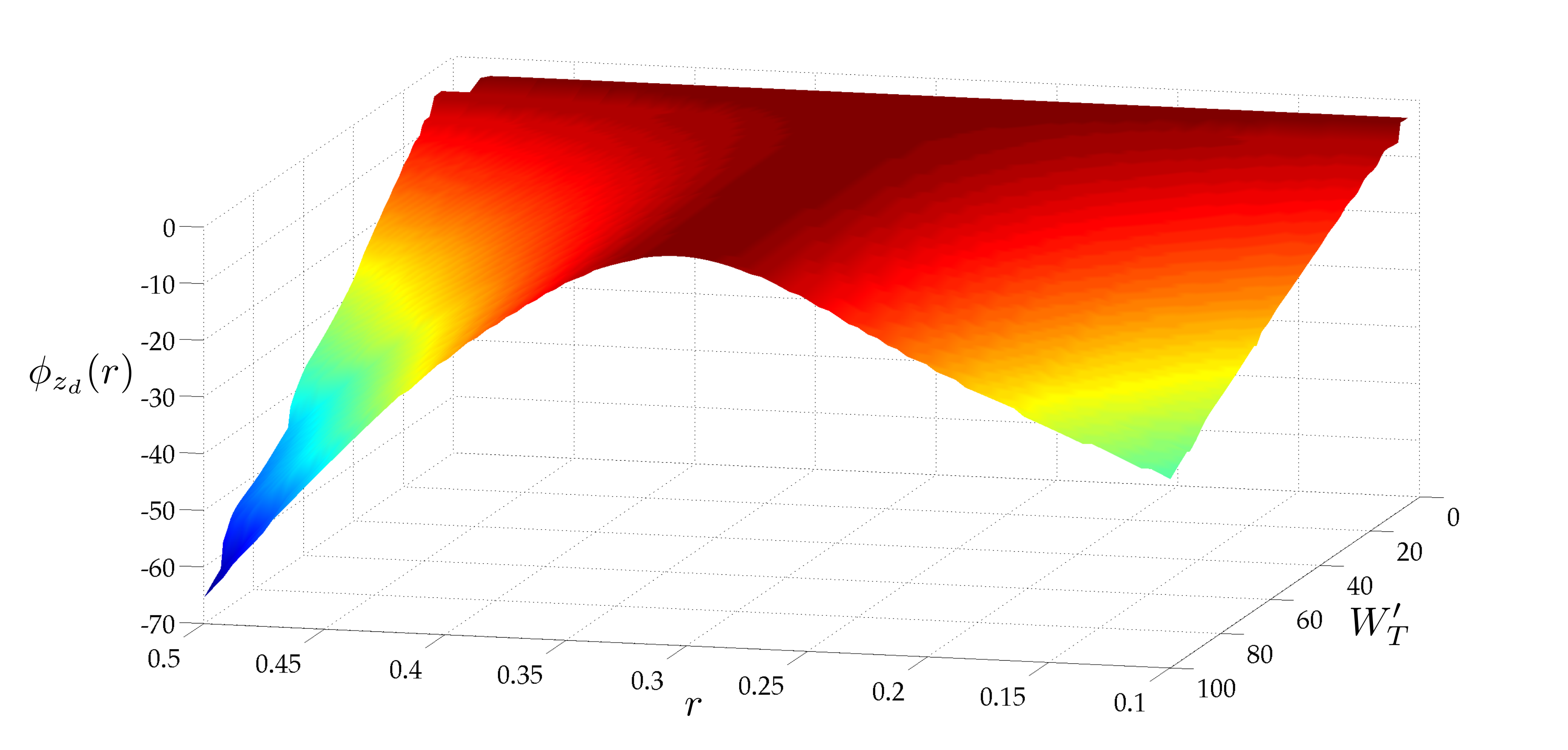}
	\label{fig:phaseplot1}}
        \subfigure[][]{
                \includegraphics[width=.47\textwidth]{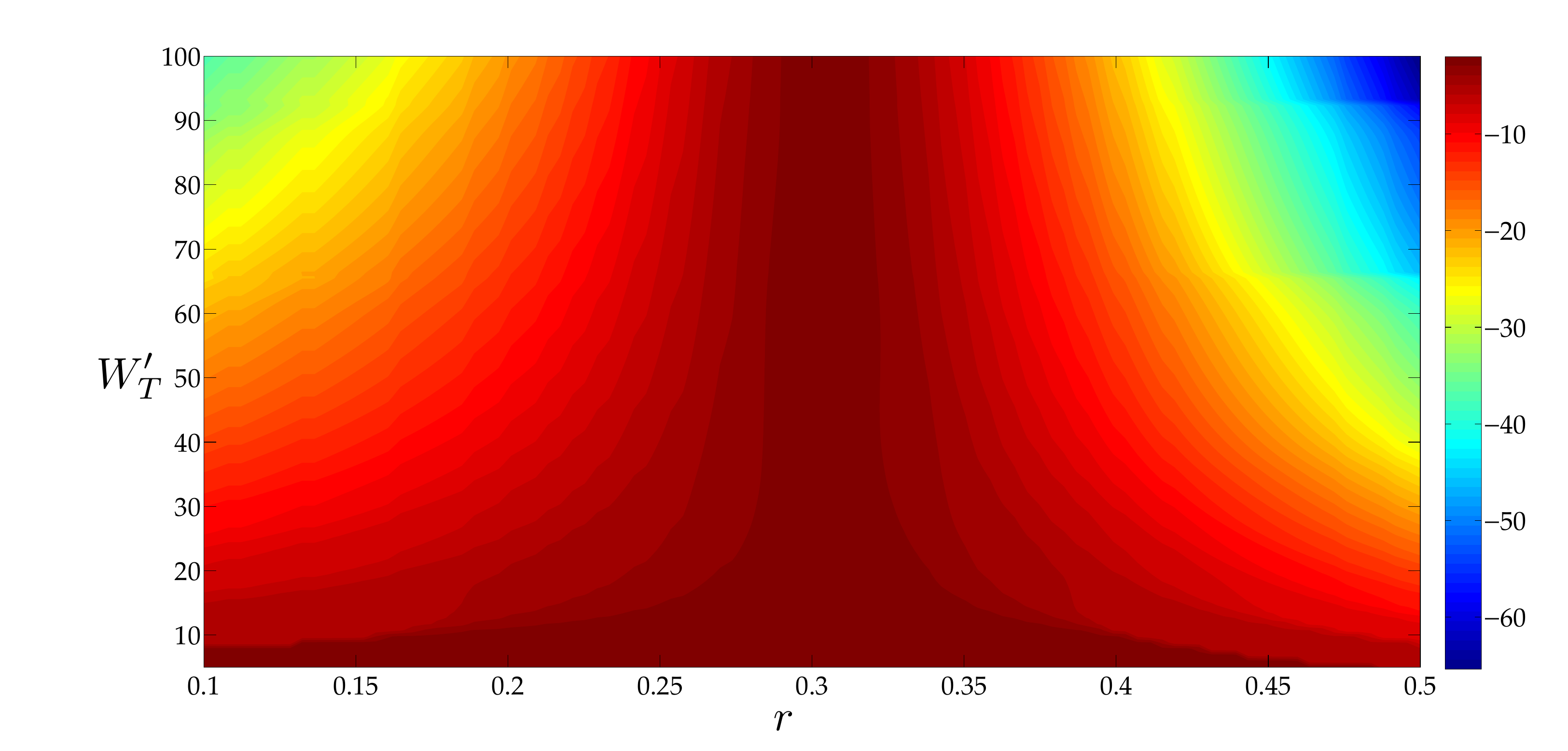}
                \label{fig:phaseplot2}}
\subfigure[][]{
                \includegraphics[width=.47\textwidth]{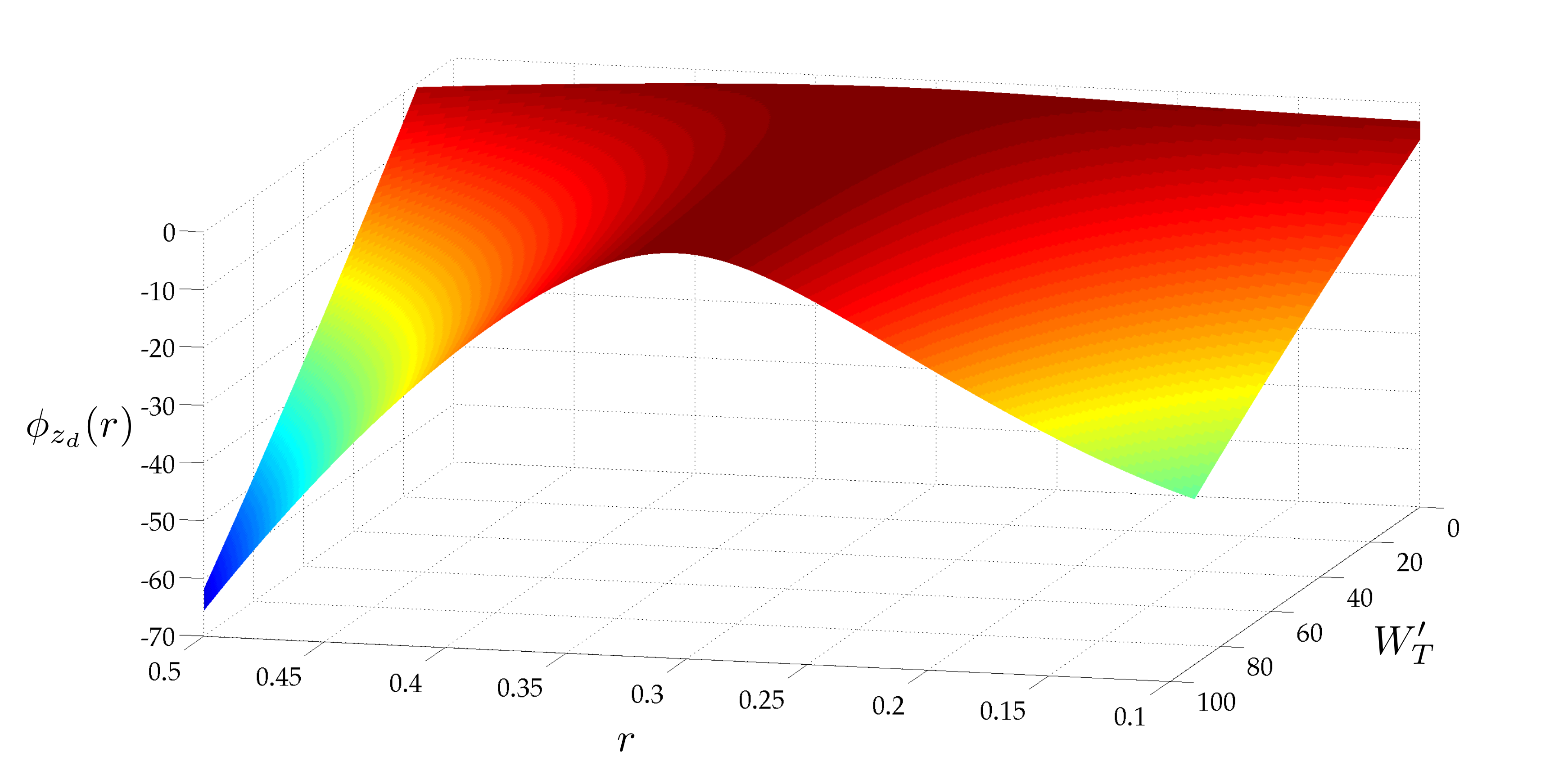}
	\label{fig:statphaseplot1}}
        \subfigure[][]{
                \includegraphics[width=.47\textwidth]{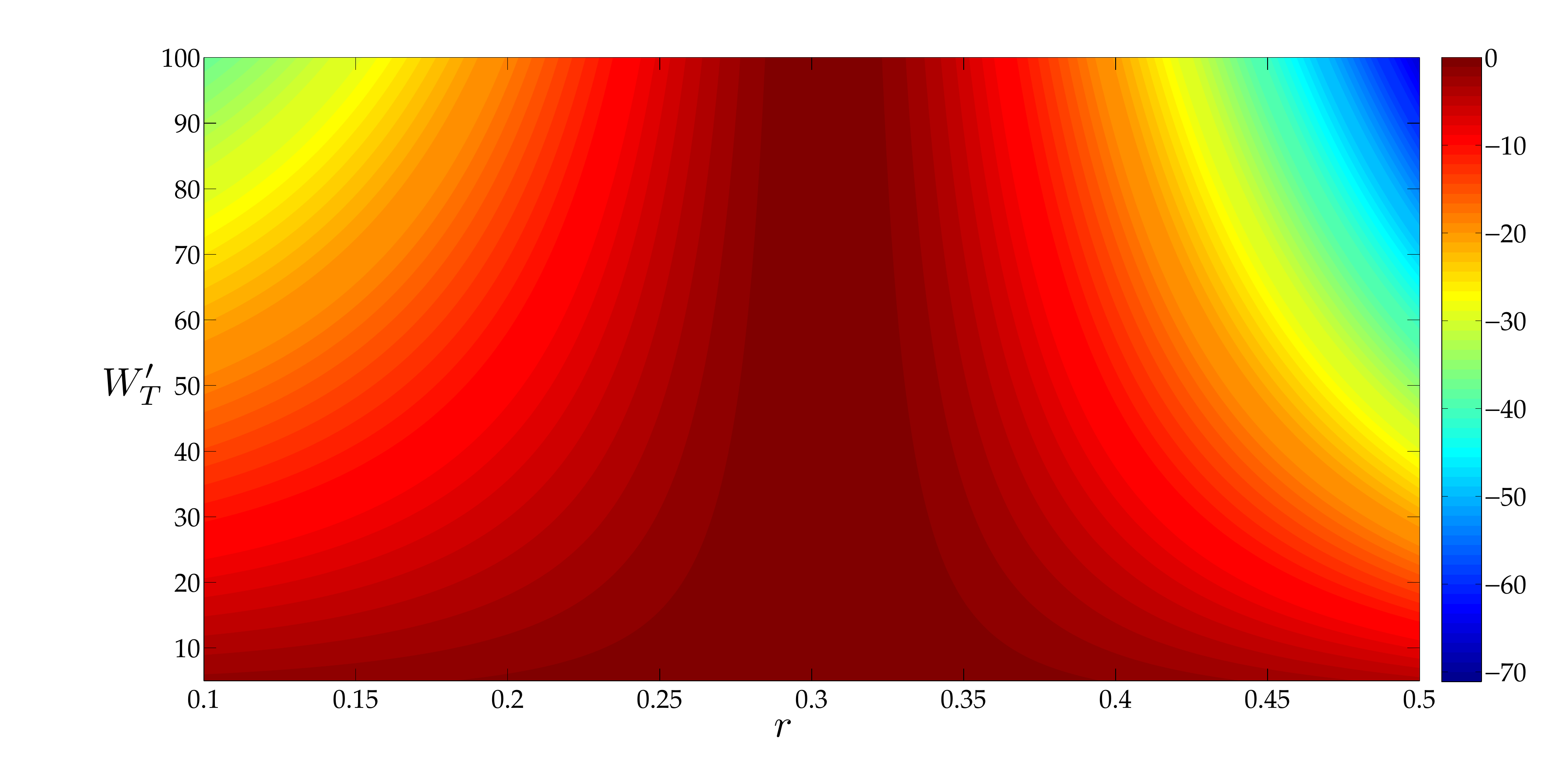}
                \label{fig:statphaseplot2}}
                \caption{\subref{fig:phaseplot1} Surface plot of the phase $\phi_{z_d}(r)=\phi(r)-kr^2/2z_d$ obtained by using the method of stationary phase to create an initial guess for $100$ iterations of the Gerchberg-Saxton algorithm with $r_0=.3m$, $W_0=.07m$, $z_d=1000m$, $n=4$ and $W_T^{\prime}$ ranging from $5m$ to $100m$. \subref{fig:phaseplot2} Contour plot of $\phi_{z_d}(r)$ for the same range of parameters. \subref{fig:statphaseplot1} Surface plot of the phase $\phi_{z_d}(r)=\phi(r)-kr^2/2z_d$ obtained by using the method of stationary phase. \subref{fig:statphaseplot2} Contour plot of $\phi_{z_d}(r)$ obtained by the method of stationary phase.}\label{fig:phaseplot}
\end{figure}

In laser ablation while it is important to have a beam with a near uniform on-axis intensity profile, the amount of power delivered by the beam is also of critical importance. In particular, numerical experiments indicate that for critical collapse of a Bessel-Gauss beam to occur the power contained in the central lobe of the beam must be near the critical power of the Townes profile \cite{fibich2000critical}.
For a fixed value of $z$ the power delivered by a beam in a circular cross section of radius $R$ about the optical axis is proportional to the following integral:
\begin{equation}
P_R(z)=\int_0^{R}|E(r,z)|^2r\,dr.
\end{equation}
Using the asymtotic formula (\ref{Stat:OffAxis})  for the off-axis profile electric field, the power in the central core of the Bessel beam at the target distance $\bar{z}=1$ can be approximated in the short wavelength limit. If we assume that $\bar{\rho}_c(1)=1$, then for $\bar{z}=1$ the electric field is approximately zero at the value $r\approx 4.96r_0/\bar{k}$. Consequently, the power delivered in the central core of the Bessel beam at the distance $z_d$ is approximated by
\begin{equation}
P_{C}(z_d)\approx E_T^2 F_T^2(z_d)\int_0^{\frac{4.96r_0}{\bar{k}}} J_0^2\left(\frac{\bar{k}r}{2r_0}\right)r dr\approx 3.12 E_T^2 F_T^2(z_d) \frac{z_d^2}{k^2 r_0^2}.
\end{equation}
From the normalization (\ref{Form:Eqn:Normalization}) it follows that in the short wavelength limit we have that:
\begin{equation*}
P_{C}(z_d)\approx 19.6 \frac{F_T^2(z_d)}{\|F_T(z)\|_{L^2}^2}\frac{z_d^2}{k r_0^2}P_0,
\end{equation*}
where $P_0$ is the initial power of the beam. Therefore, from Equation (\ref{BB:WTNorm}) it follows that
\begin{equation*}
P_C(z_d)\approx 9.8\times 2^{-\frac{1}{2n}}\Gamma\left(1+\frac{1}{2n}\right)^{-1}\frac{z_d^2}{kr_0^2 W_T^{\prime}}P_0.
\end{equation*}
This equation illustrates the trade-off between the amount of power delivered within the central lobe and the accuracy of the shaping. In particular, for $\overline{k}\gg1$ and $W_0^{\prime}\gg W_0^{\prime}$ the  shaping will be very accurate but the amount of power delivered by the beam in the central core will be much smaller than the initial power of the beam. For the range of parameters used to generate Figure \ref{fig:BesselTargetWidth}, for example, this approximation for $P_C(z_d)$ ranges from $.96P_0$ to $.01P_0$.

\subsection{Oscillatory patterns}
In this subsection we explore the possibility of using beam shaping to create intensity distributions with an oscillatory pattern for length scales that are applicable to micromachining. Again we use a Gaussian input beam:
\begin{equation}
f(r)=\exp\left(-\frac{(r-r_0)^2}{W_0^{\prime\,2}}\right)
\end{equation}
with $r_0=50mm$ and $W_0=5mm$. For integer values of of $m$ we take the target profile to be:
\begin{equation}
F_T(z)=\frac{4}{5}\exp\left(-\frac{(z-z_d)^{16}}{W_T^{\prime\, 16}}\right)\left(\cos^2\left(\frac{(z-z_d)m}{2\pi W_T^{\prime}}\right)+\frac{1}{4}\right)
\end{equation}
with $z_d=1m$, $W_T^{\prime}=1mm$ and we take $k=9.7\times 10^6m^{-1}$. As before, we can then apply the method of stationary phase coupled with the Gerchberg-Saxton algorithm to numerically optimize $I$. Now, it follows that $\beta=12.1$ for these parameters  and hence Theorem \ref{thm:Unc1} is not relevant. However, as the value of $m$ increases the target intensity develops fine scale oscillations of approximate width $w_m=2W_T^{\prime}/m$ and therefore by Corollary \ref{cor:unc1} we expect the accuracy of the beam shaping to decrease with increasing $m$. 

As in section 2 define
\begin{equation}
G(\Omega)=\frac{1}{\Omega}E_TF_T\left(\frac{k}{2\Omega}\right) \text{ and } g(s)=E_0 f\left(\sqrt{r}\right)
\end{equation}
and for $0<w_m<W_T$ define the interval $S_G(z_d,w_m)$ by
\begin{equation}
S_G(z_d,w_m) =\left\{ \Omega \in \mathbb{R}: \frac{k}{2z_d+w_m}\leq \Omega \leq \frac{k}{2z_d-w_m}\right\}.
\end{equation}
To capture the influence of the small scale oscillations on the error we define the following local quantity:
\begin{equation}
\delta_m=\frac{{\int_{S_G(z_d,W_m)}G^2(\Omega)\,d\Omega-\frac{2kw_m}{4z_d^2-w_m^2}\|g(s)\|_{L^1}}}{\int_{S_G(z_d,w_m)}G^2(\Omega)\,d\Omega}
\end{equation}
which is a local measure of the error in matching the target intensity within one period. From Corollary \ref{cor:unc1} it follows that if $\delta_m>0$ then the small scale oscillations cannot be matched by any phase function.

In Figure \ref{fig:bumps} we plot the on-axis intensity profiles for the beams obtained by using the stationary phase as an initial guess for the Gerchberg-Saxton algorithm with $100$ iterations. We selected the values $m=1,5,10,15$ to illustrate the influence of $m$ on the accuracy of the shaping. As we can see, with increasing $m$ the method of stationary phase yields a guess that matches the support of the function but fails to capture the small scale oscillations. For these values of $m$ we have that $\delta_1=-8.6$, $\delta_5=-.92$, $\delta_{10}=.04$ and $\delta_{15}=.3$ and indeed for $m=10,15$, as is to be expected, the Gerchberg-Saxton algorithm cannot match the target intensity.  It is interesting to note, however,  for large $m$ that although the Gerchberg-Saxton algorithm fails to capture the amplitude of the oscillations it does a decent job recovering the period.
\begin{figure}[ht]
 \centering
                \includegraphics[width=\textwidth]{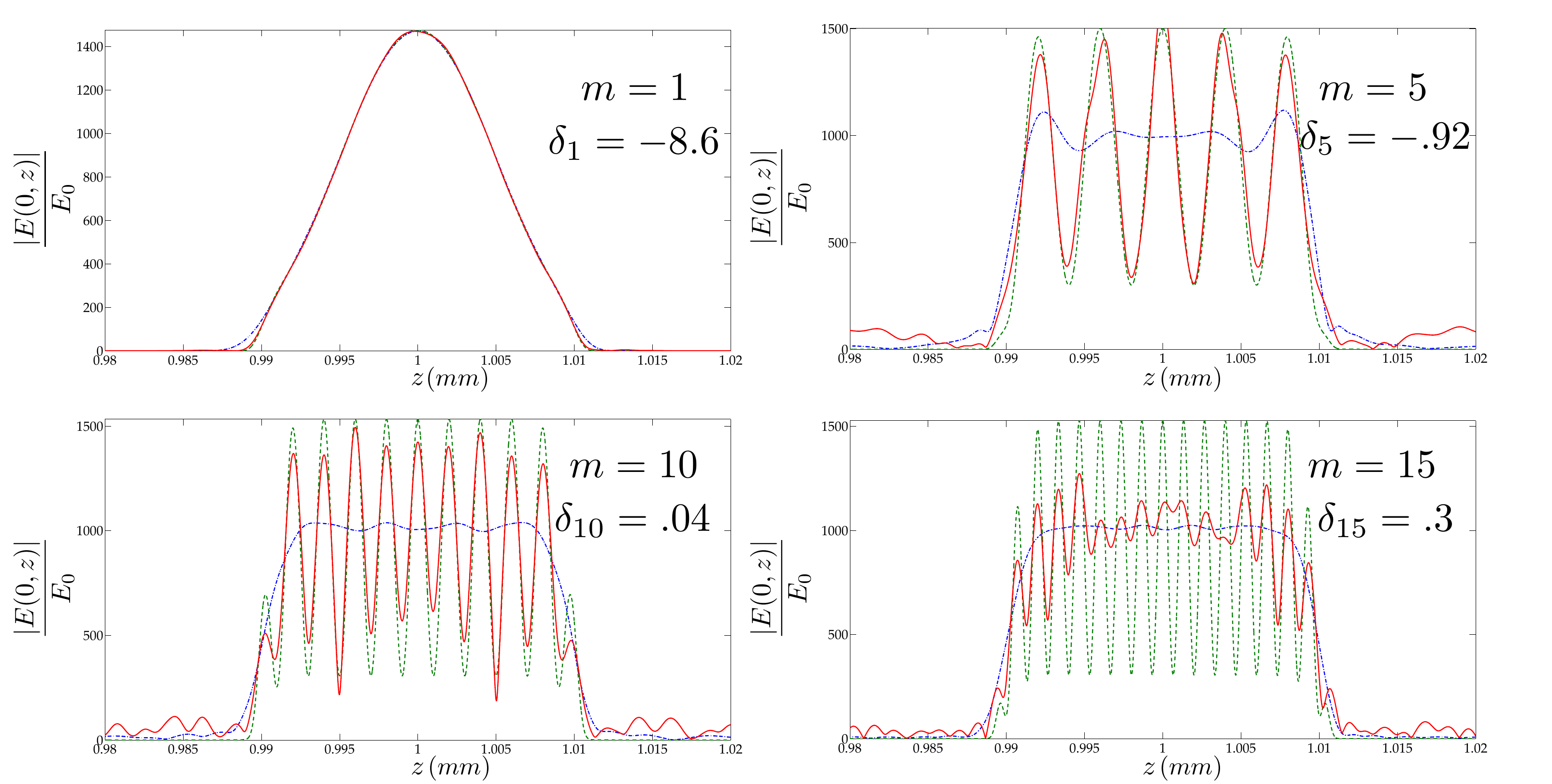}
                        \caption{Intensity profiles obtained by the stationary phase method coupled with the Gerchberg-Saxton algorithm for $m=1,5,10,15$. The blue dashed-dotted curves are the on-axis intensities obtained through the stationary phase analysis alone, the solid red curves are the on-axis intensities obtained by coupling the stationary phase algorithm with the Gerchberg-Saxton algorithm and the green dashed curves are the target intensity profile. The function $\delta_m$ quantifies the accuracy of the beam shaping and in particular for $\delta_m>0$ the amplitude of the oscillations is not matched. }\label{fig:bumps}
\end{figure}

\subsection{Remote delivery of Gaussian pulses}
In this subsection we show that for a linear medium and under the paraxial approximation the techniques we developed in this paper can be combined with linear temporal chirping to remotely deliver pulses with a desired temporal width at a target distance. The governing equation for such pulses is the temporal paraxial wave equation:
\begin{equation} \label{eqn:TempPW}
\frac{\partial E}{\partial z}=\frac{i}{2k} \Delta_{\perp} E-i \frac{\gamma}{2} \frac{\partial^2 E}{\partial t^2},
\end{equation}
where $\gamma$ is the group velocity dispersion for a pulse with wavenumber $k$ \cite{newell1992nonlinear}. If we assume separable initial data of the form $E(r,0,t)=H(r)T(t)$ then the solution to Equation (\ref{eqn:TempPW}) can be expressed in terms of the Hankel transform of a convolution:
\begin{align}
E(r,z,t)=&\frac{-ik}{z^{3/2}\sqrt{\pi \gamma}\sqrt{1-i}}\int_0^{\infty}\int_{-\infty}^{\infty}H(r)\exp\left(\frac{i k(r^2+\rho^2)}{2z}\right)J_0\left(\frac{k \rho r}{z}\right)\nonumber\\
&\,\times T(t^{\prime})\exp\left(-i\frac{(t-t^{\prime})^2}{2z\gamma}\right)\rho d\rho dt^{\prime}.
\end{align}

Again, we assume that the input spatial profile is given by $H(r)=E_0f(r)\exp(i \phi(r))$, where $E_0>0$, $f\in C_0^{\infty}(\mathbb{R}^+)$ is supported on the interval $r_0-W_0/2<r<r_0+W_0/2$ and $\phi$ is a measurable function corresponding to a shaper phase. We assume that the input temporal profile is a linearly chirped Gaussian pulse:
\begin{equation}
T(t)=\exp\left(-\frac{t^2}{\tau_0^2}\right)\exp\left( i \frac{t^2}{\alpha^2}\right)
\end{equation}
where $\tau_0>0$ is a measure of the temporal width of the initial pulse and $\alpha>0$ is a linear chirping parameter. With this initial data, the on-axis intensity is given by
\begin{equation}
\left|E(0,z,t)\right|=\frac{k}{z\left(q(z)\right)^{\frac{1}{4}}}\exp\left(-\frac{t^2}{\tau_0^2 q\left(z\right)}\right)\left|\int_0^{\infty} E_0f(\rho)\exp\left(i\phi(\rho)\right)\exp\left(i \frac{\rho^2}{2z}\right)\rho d\rho\right|,
\end{equation}
with $q(z)$  a quadratic function defined by
\begin{equation}
q(z)=\left(1-\frac{2z\gamma}{\alpha^2}\right)^2+\frac{4z^2\gamma^2}{\tau_0^4}.
\end{equation}
The function $q(z)$ quantifies how the temporal width broadens or contracts as it propagates; indeed the characteristic temporal width is $W(z)=\tau_0 \sqrt{q(z)}$.

We assume that the target profile along the optical axis is also given by a Gaussian pulse of the form
\begin{equation}
E_T(z,t)=E_T F_T(z)\exp\left(-\frac{t^2}{\tau_T^2}\right),
\end{equation}
where $E_T>0$, $F_T\in C_0^{\infty}(\mathbb{R}^+)$ with support $z_d-W_T/2<z< z_d+W_T/2$ and $\tau_T>0$. To ensure that $W(z)$ matches the desired temporal width at $z_d$ and the change in $W(z)$ over the support of $F_T$ is as small as possible we must have that
\begin{equation*}
W(z_d)=\tau_T \text{ and } \left.\frac{d W}{dz}\right|_{z_d}=0.
\end{equation*}
Solving these equations yields the conditions:
\begin{equation}\label{Pulse:parameters}
\alpha^2=\frac{ \tau_T^4 +4z_d^2 \gamma^2}{2z_d \gamma} \text{ and } \tau_0^2=\frac{\tau_T^4 +4z_d^2 \gamma^2}{\tau_T^2}.
\end{equation}

Define $G(\Omega)$ by 
\begin{equation}
G(\Omega)=\frac{k\left(q\left(\frac{k}{2\Omega}\right)\right)^{\frac{1}{4}}}{\Omega}F_T\left(\frac{k}{2\Omega}\right)
\end{equation}
and as we did before define $g(s)=E_0f(\sqrt{s})$, $ \varphi(s)=\phi(\sqrt{s})$. The problem of designing the optimal shaper phase with this temporal chirping then corresponds to minimizing the functional $I:\mathcal{M}\mapsto \mathbb{R}^+$ defined by $I[\varphi]=\|G-|\mathcal{F}(g \exp(i \varphi))| \|_{L^2}$ which problem corresponds to the variational problem we considered before except for the additional term
\begin{equation*}
k\left(q\left(\frac{k}{2\Omega}\right)\right)^{\frac{1}{4}}\Omega^{-1}
\end{equation*}
accounting for the additional modification to the spatial intensity arising from temporal broadening. 

In Figure \ref{fig:temporalprofile} we plot the on-axis spatial-temporal profile for a Gaussian pulse propagating in air obtained by using the method of stationary phase. For this figure we again assumed that 
\begin{equation}
f(r)=\exp\left(-\frac{(r-r_0)^2}{W_0^{\prime\, 2}}\right) \text{ and } F_T(z)=\exp\left(-\frac{(z-z_d)^{2n}}{W_T^{\prime\, 2n}}\right),
\end{equation}
with 
\begin{equation*}
\begin{array}{cccc}
\displaystyle{z_d=1000m}, & \displaystyle{r_0=.5m}, & k= 7.9\times 10^6 m^{-1}, & \displaystyle{n=8}
\end{array}
\end{equation*}
\begin{equation*}
\begin{array}{cc}
 \displaystyle{W_0^{\prime}=.1m}, & W_T^{\prime}=20m.
\end{array}
\end{equation*}
Following the experimental results in \cite{wrzesinski2011group} we took the group velocity dispersion of air for this wavelength of light to be $\gamma=20\text{(fs)}^2m^{-1}$ and we assumed a target temporal width of $\tau_T=50$fs. For these values the initial temporal width is $\tau_0\approx 800$fs and the chirping parameter is $\alpha= 200$fs.
\begin{figure}[ht]
        \centering
                \includegraphics[width=\textwidth]{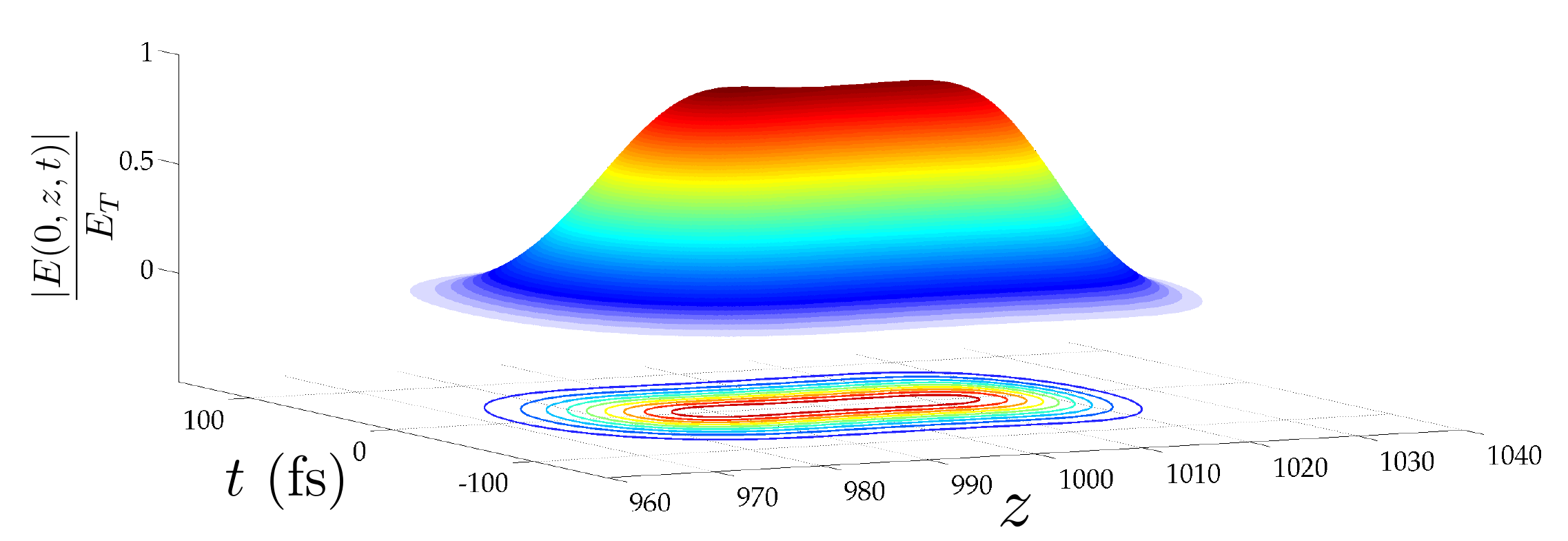}
                        \caption{Spatial temporal profile of a Gaussian pulse with near uniform on-axis intensity propagating in the atmosphere. The profile was obtained using the method of stationary phase.}\label{fig:temporalprofile}
\end{figure}

\section{Summary and discussion}
In this paper we studied the problem of phase shaping an annular beam into a desired intensity profile along the optical axis by analyzing minimizers of the functional $I: \mathcal{M} \mapsto \mathbb{R}$ defined by Equation \ref{Form:DefFunc}. Formally, this functional is equivalent to the ones arising in phase retrieval from two intensity measurements. Typically, due to its speed and ease of implementation, the Gerchberg-Saxton algorithm is used to construct approximate minimizers. While the Gerchberg-Saxton algorithm is an error reducing algorithm there are no known general convergence results for this algorithm. In particular the algorithm typically stagnates away from the global minimum which in phase retrieval can be overcome by additional modifications to the algorithm such as the hybrid input-output method \cite{Fienup:1982di} or those in \cite{pasienski2008high}. Moreover, in phase retrieval the infimum of $I$ is zero and convergence of the Gerchberg-Saxton algorithm and its modifications can be assessed by the proximity of $I$ to zero. However, as a consequence of the uncertainty principle the minimum value of $I$ may be significantly bounded away from zero. Consequently it is not clear if modifications to the Gerchberg-Saxton algorithm are applicable to beam shaping problems nor is it clear what a sufficient convergence criterion of such an algorithm would be.

The central result of our work is the identification of the dimensionless quantity
\begin{equation*}
\beta=\frac{2k r_0 W_T W_0}{4z_d^2-W_T^2}
\end{equation*}
and the associated lower bounds on the functional $I$. We identified three scaling regimes for $\beta$ for which different quantitative and qualitative behaviors of the optimal profile occur. These regimes are as follows.
\begin{enumerate}
\item For $\beta\ll \pi$ the uncertainty principle guarantees that the optimal profile will differ significantly from the target profile. This result has the consequence that neither the Gerchberg-Saxton algorithm nor any other numerical algorithm will yield accurate shaping of the beam. 

\item For $\beta \gg \pi$ the method of stationary phase yields an approximation to the problem that is very close to the optimal profile. This asymptotic regime can be considered within the geometrical optics setting in the sense that the light rays originating from the input plane can be accurately mapped to the target intensity profile through the method of stationary phase. Indeed, for the problem of creating a beam with a nearly uniform on-axis intensity, the Gerchberg-Saxton algorithm coupled with the method of stationary phase yields nearly optimal profiles.  In particular, the method of stationary phase can be used as a robust method for overcoming the stagnation issues of the Gerchberg-Saxton algorithm commented on in \cite{durfee2013phase}. 

 \item For the intermediate regime where $\beta\sim \pi$ the phase produced by the method of stationary phase is significantly improved upon by the Gerchberg-Saxton algorithm. However, in this asymptotic regime the uncertainty principle no longer applies. Moreover, the value of $I$ for $\phi$ obtained through the method of stationary phase is not close to zero. Consequently, within this regime there is no clear convergence of the algorithm. Following the reasoning in Remark \ref{form:Remark} we could obtain more accurate lower bounds on the functional that account for the specific forms of the input and target intensity profile instead of simply the widths of their supports. However, in the regime $\beta \sim \pi$ a universal scaling law for the error in terms of the wavelength is not possible. 
\end{enumerate}

%We also proved in this paper the existence of a minimizer for this problem and studied the regularity of the optimal phase. While for phase retrieval it is clear that a minimum exists this was not clear at the outset of our study. From the existence of a minimum for this problem we also proved that as a consequence of the Paley-Wiener Theorem that the optimal phase is smooth except for possible jump discontinuities of $2\pi$. In particular, since we are initializing the Gerchberg-Saxton algorithm with a smooth phase obtained from the stationary phase analysis this result implies that the numerically optimal phase will be smooth. This regularity result is important for the fabrication of optical elements that will achieve the phase shaping.

The functional we considered was derived under the paraxial assumption and hence is applicable for linear beams with the same range of wavelengths and geometrical length scales as in the Fresnel approximation. Indeed, for the range parameters we considered in the applications, the Fresnel number ranges from $150$ to $4000$. Consequently, for these parameters the integral transform given by Equation (\ref{Intro:OpticalAxis}) very accurately approximates the propagation of the on-axis electric field for linear beams. Therefore, the results of this paper and in particular the uncertainty principle are fundamental and place restrictions on the types of intensity profiles that can be achieved through linear phase shaping alone. 

For specific applications and in particular those requiring high power beams it may be necessary to consider the off-axis electric field and nonlinear interactions. Equation (\ref{Stat:OffAxis}) gives an asymptotic result indicating that the intensity pattern near the target distance will form a Bessel-like function. Numerical evidence indicates that for critical collapse of the beam to occur the power in the central core of the Bessel function must be near that of the Townes profile \cite{fibich2000critical}. For the applications we considered, it follows from Equation (\ref{Form:Eqn:Normalization}) that the initial peak intensity of the ring beam $E_0^2$ is much smaller than the target peak intensity $E_T^2$ of the generated Bessel beam and hence Kerr nonlinearities will be negligible until the ring beam is focused into the Bessel-like pattern. Predicting the filamentation patterns of a beam after critical collapse is a challenging problem and perhaps the nonlinear geometrical optics method presented in \cite{gavish2008predicting} could be used in conjunction with our methods near the target distance. The full effect of the Kerr nonlinearities will be addressed in subsequent publications. Additionally, for propagation of the beam over ranges of $1km$ it will be necessary to consider the effects of turbulence in the atmosphere which is beyond the scope of this paper. \\
\appendix

\section{Radially symmetric solutions to the paraxial wave equation} \label{Appendix:Solution}
Consider the initial value problem with radially symmetric initial data:
\begin{equation}
\begin{cases}
\displaystyle{\frac{\partial E}{\partial z}=\frac{i}{2k} \Delta_{\perp}E}\\
\displaystyle{E(r,0)=E_0f(r)} 
\end{cases},
\end{equation}
where $f$ is a smooth function with compact support. If we let $\mathcal{H}[g](k_{\perp})$ and $\mathcal{H}^{-1}[G](r)$ denote the Hankel transform and its inverse defined by
\begin{equation}
\mathcal{H}[g](k_{\perp})=\int_0^{\infty} g(r)J_0(rk_{\perp})\,dr \text{ and } \mathcal{H}^{-1}[G](r)=\int_0^{\infty} G(k_{\perp})J_0(rk_{\perp})k_{\perp}dk_{\perp} 
\end{equation}
then the solution to the initial value problem is given by
\begin{equation}
E(r,z)=\int_0^{\infty} E_0\mathcal{H}[f(r)](k_{\perp}) \exp\left(-\frac{i k_{\perp}^2 z}{2k} \right)J_0\left(rk_{\perp}\right)k_{\perp} dk_{\perp}.
\end{equation}
Noting that 
\begin{equation}
\mathcal{H}^{-1}\left[\exp\left(-\frac{i k_{\perp}^2}{2k}\right)\right](r)=-\frac{ik}{z} \exp\left( \frac{i k r^2}{2z}\right)
\end{equation}
it follows from the convolution theorem for Hankel transforms \cite{poularikas2009transforms} that 
\begin{align}
E(r,z)&=-\frac{i k}{2\pi z} \int_0^{2\pi}\int_0^{\infty} f(\rho) e^{\frac{ik}{2z}\left(r^2-2r\rho(\cos(\theta) \cos(\phi)+\sin(\theta)\sin(\phi))+\rho^2\right)}\rho\, d\rho d \phi.\\
&=-\frac{ik}{z}\int_0^{\infty} E_0 f(\rho)\exp(i \phi(\rho))\exp\left(\frac{ i k (r^2+\rho^2)}{2z}\right)J_0\left( \frac{ k \rho r}{z}\right) \rho\, d\rho.
\end{align}

\noindent \textbf{Acknowledgments:} The authors acknowledge funding support under the MURI AFOSR grant FA9550-10-0561. C.D. acknowledges funding support from AFOSR under the grant FA9550-10-1-0394 an S.V. acknowledges support from the NSF grant 0807501. The authors wish to thank Ewan Wright for many useful discussions and bringing to our attention reference  \cite{Bagini} and we would like to thank Colm Dineen who constructed Figure \ref{fig:Besselsubfig1}. The authors would also like to thank the two anonymous referees who reviewed this paper. Their comments greatly improved the organization and the presentation of our results. \\

\noindent \textbf{References}
\bibliographystyle{elsarticle-num}
%\bibliography{tau}
\def\cprime{$'$}

\end{document}